\documentclass[leqno]{amsart}

\usepackage{stmaryrd,graphicx}
\usepackage{amssymb,mathrsfs,amsmath,amscd,amsthm,color}
\usepackage{float}
\usepackage{mathabx}
\usepackage[all,cmtip]{xy}
\DeclareMathAlphabet{\mathpzc}{OT1}{pzc}{m}{it}
\usepackage{amsfonts,latexsym,wasysym}


%

\newcommand{\ui}{I}

\newcommand{\wt}{\widetilde}

\newcommand{\mcc}{\mathcal{C}}

\newcommand{\scra}{\mathscr{A}}
\newcommand{\scrb}{\mathscr{B}}
\newcommand{\scrc}{\mathscr{C}}
\newcommand{\scrd}{\mathscr{D}}

\newcommand{\scrr}{\mathscr{R}}
\newcommand{\scrs}{\mathscr{S}}

\newcommand{\bbh}{\mathbb{H}}
\newcommand{\bbn}{\mathbb{N}}

\newcommand{\bbr}{\mathbb{R}}

\newcommand{\bbz}{\mathbb{Z}}

\newtheorem{theorem}{Theorem}[section]
\newtheorem{lemma}[theorem]{Lemma}

\newtheorem{corollary}[theorem]{Corollary}
\theoremstyle{definition}\newtheorem{definition}[theorem]{Definition}
\newtheorem{example}[theorem]{Example}

\newtheorem{remark}[theorem]{Remark}

\begin{document}
\title{The infinitary $n$-cube shuffle}

\author[J. Brazas]{Jeremy Brazas}
\address{West Chester University\\ Department of Mathematics\\
West Chester, PA 19383, USA}
\email{jbrazas@wcupa.edu}

\subjclass[2010]{ 55Q52 , 55Q20 ,08A65  }
\keywords{higher homotopy group, infinite product, infinitary commutativity, Eckmann-Hilton principle, k-dimensional Hawaiian earring}

\date{\today}

\begin{abstract}
In this paper, we formalize the sense in which higher homotopy groups are ``infinitely commutative." In particular, we both simplify and extend the highly technical procedure, due to Eda and Kawamura, for constructing homotopies that isotopically rearrange infinite configurations of disjoint $n$-cubes within the unit $n$-cube.
\end{abstract}

\maketitle

\section{Introduction}

The classical Eckmann-Hilton Principle \cite{EckHilt} allows one to construct boundary-relative homotopies between $n$-loops, i.e. maps $(I^n,\partial I^n)\to (X,x)$, that realize rearrangements of finitely many disjoint $n$-cubes within $I^n$. As an immediate consequence, all higher homotopy groups are commutative. In general, finite configurations of disjoint $n$-cubes play an important role in homotopy theory as they form the little $n$-cubes operad \cite{BV}, which provides a convenient setting in which one can effectively recognize higher loop space structure \cite{MayGoILS}. In this paper, we develop methods for rearranging infinite configurations of $n$-cubes with application to the higher homotopy groups of Peano continua that admit natural infinite product (i.e. infinitary) operations, e.g. the $n$-dimensional Hawaiian earring $\bbh_n$ studied in \cite{BarrattMilnor,EK00higher,Kawamurasuspensions}.

In \cite{EK00higher}, Eda and Kawamura develop a highly technical sequence of homotopies, which form a procedure for continuously rearranging infinitely many disjoint $n$-cube domains with $I^n$. Their argument is applied to $n$-loops in a shrinking wedge $\wt{\bigvee}_{m}X_m$ of a sequence $\{X_m\}_{m\in\bbn} $ of $(n-1)$-connected spaces with a semilocal contractability condition at the basepoint, $\pi_k(\wt{\bigvee}_{m}X_m)$ is trivial when $1\leq k\leq n-1$ and the inclusion $\wt{\bigvee}_{m}X_m\to \prod_{m}X_m$ induces an isomorphism on $\pi_n$, giving $\pi_n(\wt{\bigvee}_{m}X_m)\cong \prod_{m}\pi_n(X_m)$. A more broadly applicable rearrangement result appears in the proof of \cite[Theorem 2.1]{Kawamurasuspensions}; however, this only applies to one specific configuration of $n$-cubes. Further understanding of the higher homotopy groups of Peano continua will require infinite and broadly applicable analogues of classical homotopy theory techniques such as cellular approximation and homotopy excision. However, even in the classical situation, one must construct homotopies carefully to ensure continuity. For instance, the proof of the standard cellular approximation theorem requires one to construct homotopies, which are the constant homotopy on predetermined subsets of the domain. 

The purpose of this paper is to (1) develop broadly applicable techniques for studying higher homotopy groups that admit geometrically relevant infinite product operations (2) provide a simpler proof of the original infinitary $n$-cube shuffle argument given in the work of Eda and Kawamura, and (3) strengthen the original argument by constructing homotopies that shuffle infinitely many $n$-cubes while remaining constant on a predetermined set of finitely many $n$-cubes whose complement is path connected.

We define an \textit{$n$-domain} to be a non-empty finite collection $\scrr=\{R_k\mid k\in K\}$ of $n$-cubes in $I^n$ (ordered by a finite or infinite set $K\subseteq \bbn$) whose interiors are pairwise disjoint. Given a sequence $\{f_k\}_{k\in K}$ of maps $f_k:(I^n,\partial I^n)\to (X,x)$ such that for every neighborhood $U$ of $x$, all but finitely many of the maps $f_k$ have image lying in $U$, we may form the \textit{$\scrr$-concatenation} $\prod_{\scrr}f_k:(I^n,\partial I^n)\to (X,x)$, which is defined as $f_k$ on $R_k$ and is constant at $x$ otherwise. If $\scrs=\{S_k\mid k\in K\}$ is another $n$-domain, the $\scrs$-concatenation $\prod_{\scrs}f_k$ may be thought of as a rearrangement of the map $\prod_{\scrr}f_k$ by moving each $R_k$ to the new position $S_k$. Our main result, which says that any desired shuffle can be achieved through a continuous homotopy, is the following.

\begin{theorem}[Infinitary $n$-Cube Shuffle]\label{infiniteeckmannhiltontheorem} Given $n$-domains $\mathscr{R}=\{R_k\mid k\in K\}$ and $\mathscr{S}=\{S_k\mid k\in K\}$ and $K$-sequence $\{f_k\}_{k\in K}$ in $\Omega^{n}(X,x)$, we have $\prod_{\mathscr{R}}f_k\simeq\prod_{\mathscr{S}}f_k$ in $X$ by a homotopy with image in $\bigcup_{k\in K}Im(f_k)$. Moreover, if there is a finite set $F\subseteq K$ such that $R_k=S_k$ for all $k\in F$ and $I^n\backslash\bigcup_{k\in F}R_k$ is path connected, then we may choose the homotopy to be the constant homotopy on $\bigcup_{k\in F}R_k$.
\end{theorem}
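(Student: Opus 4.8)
The plan is to build the homotopy directly on the spacetime cylinder $I^n\times I$ as a map $G\colon I^n\times I\to X$, by transporting each $f_k$ from its $\mathscr{R}$-position to its $\mathscr{S}$-position inside a thin embedded \emph{tube} and setting $G$ equal to the basepoint $x$ everywhere off the tubes. Precisely, for each $k\in K$ I would choose an embedding $\theta_k\colon I^n\times I\to I^n\times I$ whose bottom face is the affine identification $I^n\times\{0\}\cong R_k\times\{0\}$ used by $\prod_{\mathscr{R}}f_k$, whose top face is the identification $I^n\times\{1\}\cong S_k\times\{1\}$ used by $\prod_{\mathscr{S}}f_k$, and which sends $\partial I^n\times I$ into the boundary of its image; on the tube $T_k=\mathrm{Im}\,\theta_k$ I would let $G$ be the map that rigidly slides the copy of $f_k$ along the tube (so each time-slice of $T_k$ carries a reparametrized $f_k$ and the tube wall is sent to $x$), and $G\equiv x$ on $I^n\times I\setminus\bigcup_k T_k$. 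With this definition $G_0=\prod_{\mathscr{R}}f_k$ and $G_1=\prod_{\mathscr{S}}f_k$ hold by the prescribed bottom and top faces, and $\mathrm{Im}\,G\subseteq\{x\}\cup\bigcup_k\mathrm{Im}(f_k)=\bigcup_k\mathrm{Im}(f_k)$ is immediate; the entire content of the theorem is thus pushed into arranging the tubes so that $G$ is \emph{continuous}.

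To make the tubes manageable I would factor each one into three stages in time. First, on $[0,\tfrac13]$, I would shrink the support of $f_k$ within $R_k$ down to a tiny concentric cube $R_k^{\circ}\subseteq\mathrm{int}(R_k)$, crushing the surrounding collar to $x$; this is a reparametrization homotopy with image in $\mathrm{Im}(f_k)$ that keeps the supports disjoint automatically, since each stays inside its original $R_k$. Symmetrically, on $[\tfrac23,1]$ I would expand a tiny concentric target cube $S_k^{\circ}\subseteq\mathrm{int}(S_k)$ back out to $S_k$. The middle stage on $[\tfrac13,\tfrac23]$ then only has to move $R_k^{\circ}$ to $S_k^{\circ}$; choosing for each $k$ a track from $R_k^{\circ}$ to $S_k^{\circ}$ and thickening it by the (small) cube size produces the middle portion of $T_k$. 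For continuity I would argue as follows: away from the common accumulation set of the cubes the family $\{T_k\}$ is locally finite, so there $G$ is a gluing of finitely many continuous maps agreeing (at $x$) on their overlaps; and at a point of the accumulation region, any approaching sequence meeting infinitely many tubes $T_{k_i}$ has $G$-values in $\mathrm{Im}(f_{k_i})$ with $k_i\to\infty$, which converge to $x$ by the defining convergence property of $\{f_k\}$, matching the value $x$ there.

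The hard part will be the middle stage: choosing the tracks so that the resulting tubes are pairwise disjoint, shrink to zero diameter, and accumulate only on a set carried to $x$. This is exactly the geometric core where the technical difficulty of Eda and Kawamura resides, and I would isolate it as a self-contained lemma about sliding a null sequence of pairwise-disjoint shrinking cubes in $I^n$ along pairwise-disjoint paths. The room to do this comes from working in the spacetime $I^n\times I$, where generic tracks can be made disjoint; one organizes the slide so that the cubes handled on time-slices near the accumulation set are already minuscule, which simultaneously forces the tube diameters to zero and confines the high-index tubes to small neighborhoods of the accumulation set. Verifying disjointness together with this diameter control for all $k$ at once is the only genuinely delicate bookkeeping in the argument.

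Finally, for the moreover statement I would treat the finite set $F$ separately. For $k\in F$, since $R_k=S_k$, I would use the trivial vertical tube $T_k=R_k\times I$ and let $G=f_k$ for all time there, so that $G$ is literally the constant homotopy on $\bigcup_{k\in F}R_k$. For $k\notin F$, I would place the tiny cubes $R_k^{\circ}$ and $S_k^{\circ}$ inside the open set $U=I^n\setminus\bigcup_{k\in F}R_k$ (possible because $\mathrm{int}(R_k)$ and $\mathrm{int}(S_k)$ meet the fixed cubes only in nowhere-dense boundary pieces), and then use the hypothesis that $U$ is path connected to route every middle-stage track from $R_k^{\circ}$ to $S_k^{\circ}$ within $U$; thickening inside the open set $U$ keeps the moving tubes inside $U\times I$, hence disjoint from the fixed vertical tubes $\bigcup_{k\in F}(R_k\times I)$. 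Path-connectedness of $U$ is precisely what guarantees these tracks exist, so this hypothesis enters only here.
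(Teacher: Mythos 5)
Your overall frame---spacetime tubes in $I^n\times I$ carrying each $f_k$ from $R_k$ to $S_k$, basepoint off the tubes, nullity of $\{f_k\}$ rescuing continuity at accumulation points---is sound in spirit, and your first and last stages are exactly the paper's Lemma \ref{shrinkingcubelemma}. But the proposal has a genuine gap: its entire mathematical content is delegated to the ``self-contained lemma'' you never prove, namely the simultaneous construction of pairwise disjoint tubes for \emph{infinitely} many cubes. That lemma \emph{is} the theorem, and your sketch of it is not workable as stated. Concretely: (i) disjoint tracks inside $I^n$ need not exist at all---for $n=2$, put $R_1,S_1$ near the corners $(0,0),(1,1)$ and $R_2,S_2$ near $(1,0),(0,1)$; any arc joining $R_1^{\circ}$ to $S_1^{\circ}$ separates the other two corner regions, so any arc joining $R_2^{\circ}$ to $S_2^{\circ}$ must cross it. Hence everything hinges on scheduling the moves in the time coordinate, and with infinitely many cubes you never specify a schedule that keeps all tubes disjoint. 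This also breaks your treatment of the restricted case, where you route all tracks inside $U=I^n\setminus\bigcup_{k\in F}R_k$: path connectedness of $U$ gives each track individually but cannot give infinitely many pairwise disjoint ones. (ii) Your continuity bookkeeping demands tubes that ``shrink to zero diameter,'' which is impossible when every $R_k$ is far from $S_k$ (a case the paper explicitly flags); what shrinks is $\mathrm{Im}(f_k)$, not the tube, and the argument must be rebuilt around that. (iii) The ``accumulation region'' can be all of $I^n$, since $\mathscr{R}$ and $\mathscr{S}$ may each be infinite cubical decompositions with $\bigcup_k R_k=\bigcup_k S_k=I^n$, so ``locally finite away from the accumulation set'' carries no information.

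For contrast, the paper never constructs simultaneous disjoint tubes and thereby avoids all of this. It first proves the finite cases (Lemma \ref{eckmannhiltonlemma} and Lemma \ref{twocyclelemma}, via repeated applications of Lemma \ref{shrinkingcubelemma}), then proves Lemma \ref{ntwistlemma}: using the $3^n$-cube subdivision $\mcc(R_1)$ of $I^n$ and the finite case, one cube at a time is ``peeled off'' to the front, giving $\prod_{\mathscr{R}}f_k\simeq f_1\cdot\bigl(\prod_{\mathscr{S}}\{f_k\}_{k\geq 2}\bigr)$. Iterating and gluing the resulting countable sequence of homotopies in a staircase pattern (Lemma \ref{eckhiltontechlemma}) reduces every infinite $\mathscr{R}$-concatenation to the standard concatenation $\prod_{k=1}^{\infty}f_k$; the staircase has a single accumulation face mapped to $x$, where nullity gives continuity. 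In effect, the time-scheduling you would need is built in automatically---one cube per time slot, each slot a finite rearrangement---so disjointness is never an issue. The restricted infinite case is then obtained not by routing tracks in $U$ but by compressing all moving cubes into one auxiliary cube $A$ disjoint from the fixed cubes (using the restricted finite case, Corollary \ref{finitecasecor}) and applying the unrestricted infinite case inside $A$. If you want to salvage your plan, you must actually prove your tube lemma, and the natural way to do so is essentially to rediscover this sequential telescope.
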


The difficulty in proving Theorem \ref{infiniteeckmannhiltontheorem} lies in its generality. When $K$ is infinite, the ordinary Eckmann-Hilton Principle no longer applies; one cannot simply rearrange finitely many of the $n$-cubes $R_k$ at a time and expect to continuously perform all required rearrangements. Additionally, both $n$-domains $\scrr$ and $\scrs$ might both be dense in $I^n$ or might even form an infinite cubical decomposition of $I^n$ in the sense that $\bigcup_{k\in K}R_k=\bigcup_{k\in K}S_k=I^n$. Moreover, infinitely many $R_k,S_k$ might have diameter $1$ and it may also be the case that for every $k\in K$, $R_k$ is nowhere near $S_k$ within $I^n$. Hence, one cannot simply perform smaller homotopies for smaller $n$-cubes. The difficulty becomes even more pronounced when one is required to perform the rearrangements while remaining fixed on a finite subset of $\scrr$.

We refer to the first statement of Theoren \ref{infiniteeckmannhiltontheorem} as the \textit{unrestricted case} and the second statement as the \textit{restricted case} since, in the second statement, we must construct the homotopy to be constant on a given finite sub-$n$-domain. Additionally, we will treat the case where $K$ is \textit{finite} separately from the case where $K$ is \textit{infinite}. Hence, we split up Theorem \ref{infiniteeckmannhiltontheorem} naturally into four (non-mutually exclusive) cases: finite unrestricted, finite restricted, infinite unrestricted, and infinite restricted, where the last case is the strongest statement to be proven.

In Section \ref{sectionprelim}, we settle definitions and notation and we also prove the particularly important ``$n$-domain shrinking Lemma" (Lemma \ref{shrinkingcubelemma}), which we make use of repeatedly throughout the paper. In Section \ref{sectionfinitecase}, we prove the unrestricted and restricted finite cases of Theorem \ref{infiniteeckmannhiltontheorem} (see Corollary \ref{finitecasecor}). The finite cases are fairly standard in homotopy theory \cite{MayGoILS}. We include these proofs (1) since they are short, (2) to provide a rigorous foundation for the infinite case using our $n$-domain framework, and (3) to contrast with the infinite case (See Remark \ref{temptingremark}). In Section \ref{sectioninfinitecase}, we prove Lemma \ref{eckhiltontechlemma} from which the infinite unrestricted case of Theorem \ref{infiniteeckmannhiltontheorem} follows immediately. The infinite unrestricted case is essentially the extent of what is achieved in \cite{EK00higher}; our proof utilizes the unrestricted finite case in a way that simplifies the overall argument given in \cite{EK00higher}. We then use both the restricted finite case and unrestricted infinite case to prove the restricted infinite case and complete the proof of Theorem \ref{infiniteeckmannhiltontheorem}.

In Section \ref{sectionconsequences}, we point out some immediate consequences of Theorem \ref{infiniteeckmannhiltontheorem}. For instance, we show that for any sequence $X_1,X_2,X_3,\dots$ of path-connected spaces, the homotopy long exact sequence of the pair $\left(\prod_{m}X_m,\wt{\bigvee}_{m}X_m\right)$ splits naturally in every dimension $n\geq 2$ (Corollary \ref{splittingcorollary}), yielding an isomorphism 
$\pi_n\left(\wt{\bigvee}_{m}X_m\right)\cong \pi_{n+1}\left(\prod_{m}X_m,\wt{\bigvee}_{m}X_m\right)\oplus \prod_{m}\pi_n(X_m)$. After an initial version of this manuscript was written, the author learned of the paper \cite{Kawamurasuspensions} by Kawamura where the above isomorphism appears as one of the main results. We conclude the paper with the brief Remark \ref{operadremark}, which shows how the framework of the current paper provides a natural extension of the little $n$-cubes operad $\mcc_n(j)$ to an infinite term $\mcc_n(\omega)$.


\section{Preliminary definitions and $n$-domain shrinking}\label{sectionprelim}

Throughout this paper, $n\geq 2$ is a fixed integer, $I=[0,1]$ is the unit interval, $(X,x)$ is a path-connected based topological space, and $\Omega^{n}(X,x)$ represents the space of relative maps $(I^n,\partial I^n)\to (X,x)$ (with the compact-open topology) so that $\pi_n(X,x)=\pi_0(\Omega^{n}(X,x))$ is the $n$-th homotopy group. The term \textit{$n$-cube} will refer to subsets of $I^n$ of the form $\prod_{i=1}^{n}[a_i,b_i]$. Given two $n$-cubes $R=\prod_{i=1}^{n}[a_i,b_i]$ and $R'= \prod_{i=1}^{n}[c_i,d_i]$ in $I^n$, let $L_{R,R'}:R\to R'$ denote the canonical homeomorphism, which is increasing and linear in each component. If $f:R\to X$ and $g:R'\to X$ are maps such that $f=g\circ L_{R,R'}$, then we write $f\equiv g$. Given maps $f_1,f_2,\dots, f_m\in \Omega^{n}(X,x)$, the \textit{$m$-fold concatenation} $\prod_{i=1}^{m}f_i=f_1\cdot f_2 \cdots f_m$ is the map $(I^n,\partial I^n)\to (X,x)$ whose restriction to $[(i-1)/m,i/m]\times I^{n-1}$ is $f_i\circ L_{[(i-1)/m,i/m]\times I^{n-1},I^n}$. If $K$ is an ordered set with the order type of $\bbn$, then a sequence of maps $\{f_k\}_{k\in K}$ in $\Omega^n(X,x)$ is \textit{null (at $x$)} if every neighborhood of $x$ contains the image $Im(f_k)$ for all but finitely many $k\in K$.

\begin{definition}
Let $K\subseteq \bbn$. 
\begin{enumerate}
\item An \textit{$n$-domain (over $K$)} is an ordered set $\mathscr{R}=\{R_k\mid k\in K\}$ of $n$-cubes in $I^n$ whose interiors are pairwise disjoint, i.e. $int(R_k)\cap int(R_{k'})=\emptyset$ if $k\neq k'$.
\item A finite $n$-domain $\{R_1,R_2,R_3,\dots, R_m\}$ is a \textit{cubical decomposition} of $I^n$ if $\bigcup_{k=1}^{m}R_k=I^n$.
\item A sequence of maps $\{f_k\}_{k\in K}$ in $\Omega^{n}(X,x)$ is called a \textit{$K$-sequence} if either $K$ is finite or if $K$ is infinite and $\{f_k\}_{k\in K}$ is null at $x$.
\item If $\mathscr{R}$ is an $n$-domain over $K$, the \textit{$\mathscr{R}$-concatenation} of a $K$-sequence $\{f_k\}_{k\in K}$ is the map $\prod_{\mathscr{R}}\{f_k\}_{k\in K}\in \Omega^n(X,x)$ whose restriction to $R_k$ is $f_k\circ L_{R_k,I^n}$ and which maps $I^n\backslash \bigcup_{n}R_n$ to $x$.
\end{enumerate}
\end{definition}

When $K$ is clear from context, we will refer to an $n$-domain $\scrr$ over $K$ simply as an \textit{$n$-domain} and we will denote the $\mathscr{R}$-concatenation by $\prod_{\mathscr{R}}f_k$.

\begin{definition}
If $\mathscr{R}=\{R_k\mid k\in K\}$ and $\mathscr{S}=\{S_k\mid k\in K\}$ are $n$-domains, we call $\mathscr{S}$ an \textit{sub-$n$-domain} of $\mathscr{R}$ if $S_k\subseteq R_k$ for all $k\in K$.
\end{definition}

The next lemma states that we may always replace an $n$-domain with an arbitrary sub-$n$-domain. It provides a crucial technique that we will use repeatedly in the proof of both the finite and infinite cases of Theorem \ref{infiniteeckmannhiltontheorem}.

\begin{lemma}[$n$-domain shrinking]\label{shrinkingcubelemma}
Let $\mathscr{S}=\{S_k\mid k\in K\}$ be any sub-$n$-domain of $\mathscr{R}=\{R_k\mid k\in K\}$. If $\{f_k\}_{k\in K}$ is any $K$-sequence in $\Omega^n(X,x)$, then $\prod_{\mathscr{R}}f_k\simeq \prod_{\mathscr{S}}f_k$ by a homotopy in $\bigcup_{k\in\bbn}Im(f_k)$. Moreover, if $R_k=S_k$, then this homotopy may be chosen to be the constant homotopy on $R_k$.
\end{lemma}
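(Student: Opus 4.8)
The plan is to build the homotopy one $n$-cube at a time and then paste the pieces together over $I^n$. First I would set $g_k=f_k\circ L_{R_k,I^n}\colon R_k\to X$ and record that $g_k$ sends $\partial R_k$ to $x$, since $L_{R_k,I^n}$ carries $\partial R_k$ onto $\partial I^n$ and $f_k(\partial I^n)=x$. By definition $\prod_{\scrr}f_k$ restricts to $g_k$ on each $R_k$ and is constant at $x$ off $\bigcup_k R_k$; using the identity $L_{S_k,I^n}=L_{R_k,I^n}\circ L_{S_k,R_k}$, the map $\prod_{\scrs}f_k$ restricts on $S_k$ to $g_k\circ L_{S_k,R_k}$ and is constant at $x$ elsewhere. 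Thus it suffices, for each $k$, to homotope $g_k$ rel $\partial R_k$ to the \emph{squeezed} map that equals $g_k\circ L_{S_k,R_k}$ on $S_k$ and $x$ on $R_k\setminus S_k$, through a homotopy whose image lies in $Im(f_k)$, and then to glue.

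For the per-cube homotopy I would linearly interpolate the defining intervals of $R_k$ and $S_k$ to obtain a continuously varying family of subcubes $S_k^t\subseteq R_k$ with $S_k^0=R_k$ and $S_k^1=S_k$, and define $H_k\colon R_k\times I\to X$ by $H_k(z,t)=g_k(L_{S_k^t,R_k}(z))$ for $z\in S_k^t$ and $H_k(z,t)=x$ for $z\in R_k\setminus S_k^t$. The two formulas agree on $\partial S_k^t$, because $L_{S_k^t,R_k}$ sends $\partial S_k^t$ into $\partial R_k$ where $g_k=x$, so $H_k$ is continuous; moreover $H_k(\cdot,0)=g_k$, $H_k(\cdot,1)$ is the squeezed map, and $H_k$ sends $\partial R_k\times I$ to $x$, so the homotopy is rel $\partial R_k$. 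Since $H_k$ takes values in $Im(g_k)\cup\{x\}=Im(f_k)$, its image is controlled; and when $S_k=R_k$ we have $S_k^t=R_k$ for every $t$, so $H_k$ is the constant homotopy at $g_k$, which will produce the ``moreover'' clause.

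I would then define $H\colon I^n\times I\to X$ by $H(z,t)=H_k(z,t)$ for $z\in R_k$ and $H(z,t)=x$ for $z\notin\bigcup_k R_k$; this is well defined since all pieces agree (taking the value $x$) on shared boundaries and with the exterior, and clearly $H(\cdot,0)=\prod_{\scrr}f_k$ and $H(\cdot,1)=\prod_{\scrs}f_k$. The one genuinely delicate point, which I expect to be the main obstacle, is continuity of $H$ when $K$ is infinite, precisely at points where infinitely many $R_k$ accumulate. I would verify continuity pointwise: at $z_0\in int(R_k)$ it is immediate from continuity of $H_k$, while at a point $z_0$ lying in no interior (so $H(z_0,t_0)=x$) I would, given a neighborhood $U$ of $x$, use that $\{f_k\}_{k\in K}$ is null at $x$ to find a finite set $F\subseteq K$ with $Im(H_k)\subseteq Im(f_k)\subseteq U$ for all $k\notin F$. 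Since the finitely many cubes $R_k$ with $k\in F$ and $z_0\notin R_k$ form a closed set avoidable by a neighborhood of $z_0$, and since for each $j\in F$ with $z_0\in\partial R_j$ the tube lemma applied to $H_j$ along the compact segment $\{z_0\}\times I$ (on which $H_j$ is constant at $x$) yields a tube $N_j\times I\subseteq H_j^{-1}(U)$, intersecting these finitely many neighborhoods gives a neighborhood $W$ of $z_0$ with $H(W\times I)\subseteq U$. This establishes continuity, and the image bound $Im(H)\subseteq\bigcup_k Im(f_k)$ together with constancy on $R_k$ when $R_k=S_k$ then follow directly from the per-cube observations, completing the argument.
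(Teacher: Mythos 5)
Your construction is essentially identical to the paper's: the family of interpolated cubes $S_k^t$ you use is exactly the slice at time $t$ of the paper's convex hull $T_k$ of $R_k\times\{0\}\cup S_k\times\{1\}$, and your glued map $H$ coincides with the paper's homotopy. The only difference is presentational --- the paper asserts continuity in one line from $Im(H|_{T_k})=Im(f_k)$, whereas you spell out the pointwise/tube-lemma verification, which is a correct elaboration of the same fact.
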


\begin{proof}
For each $k\in K$, let $T_k=\{((1-t)a+tL_{R_k,S_k}(a),t)\in I^n\times I\mid a\in R_k, t\in\ui\}$ be the convex hull of $R_k\times \{0\}\cup S_k\times \{1\}$ in $I^{n+1}$ and note that $T_k\subseteq R_{k}\times I$. Define homotopy $H:I^n\times I\to X$ by \[H(b,s)=\begin{cases}
x , &\text{ if }(b,s)\in I^{n+1}\backslash \bigcup_{k\in K}T_k\\
f_k(L_{R_k,I^n}(a)) &\text{ if }b=(1-s)a+sL_{R_k,S_k}(a)\text{ for }a\in R_k.
\end{cases}\]
Since $H(\partial T_k\backslash (int(R_k)\times \{0\}\cup int(S_k)\times \{1\}))=x$, $H$ is well-defined. In the case that $K$ is infinite, the continuity of $H$ follows from the fact that $Im(H(T_k))=Im(f_k)$ for all $k\in\bbn$. The last statement of the lemma is evident from the construction of $H$.
\end{proof}
\section{The Finite Case}\label{sectionfinitecase}

If $K$ is any set, then $\Sigma_{K}$ will denote the symmetric group of all bijections $K\to K$ and for $m\in\bbn$, $\Sigma_m$ will denote the symmetric group of $\{1,2,\dots ,m\}$.

\begin{lemma}\label{eckmannhiltonlemma}
If $\mathscr{R}=\{R_1,R_2,\dots,R_m\}$ is an $n$-domain, $f_1,f_2,\dots, f_m\in \Omega^n(X,x)$, and $\phi\in \Sigma_m$, then $\prod_{\mathscr{R}}f_k\simeq \prod_{k=1}^{m}f_{\phi(k)}$ by a homotopy in $\bigcup_{k=1}^{m}Im(f_k)$.
\end{lemma}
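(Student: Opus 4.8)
The plan is to deduce the lemma from a more symmetric statement: for any two $n$-domains $\mathscr{R}=\{R_k\mid k\in K\}$ and $\mathscr{S}=\{S_k\mid k\in K\}$ over the same finite set $K=\{1,\dots,m\}$ and any $f_1,\dots,f_m\in\Omega^n(X,x)$, one has $\prod_{\mathscr{R}}f_k\simeq\prod_{\mathscr{S}}f_k$ by a homotopy in $\bigcup_{k}Im(f_k)$. This yields the lemma because the $m$-fold concatenation $\prod_{k=1}^{m}f_{\phi(k)}$ is itself an $\mathscr{S}$-concatenation: taking $S_k=[(\phi^{-1}(k)-1)/m,\phi^{-1}(k)/m]\times I^{n-1}$ places $f_{\phi(i)}$ on the $i$-th standard slab, so $\prod_{\mathscr{S}}f_k=\prod_{k=1}^{m}f_{\phi(k)}$. (Since each $f_k$ sends $\partial I^n$ to $x$, we have $x\in Im(f_k)$, so the image condition also accounts for the constant value $x$.)

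To prove the symmetric statement I would first apply Lemma \ref{shrinkingcubelemma} twice to shrink both domains. For a nondegenerate axis-aligned cube $R_k\subseteq I^n$ one has $int(R_k)\subseteq(0,1)^n$, so I may choose a sub-$n$-domain $\mathscr{R}'=\{R'_k\}$ with each $R'_k$ an arbitrarily small cube inside $int(R_k)$, and likewise $\mathscr{S}'=\{S'_k\}$ with $S'_k\subseteq int(S_k)$. By the shrinking lemma, $\prod_{\mathscr{R}}f_k\simeq\prod_{\mathscr{R}'}f_k$ and $\prod_{\mathscr{S}'}f_k\simeq\prod_{\mathscr{S}}f_k$ through homotopies in $\bigcup_k Im(f_k)$, reducing the problem to connecting $\prod_{\mathscr{R}'}f_k$ to $\prod_{\mathscr{S}'}f_k$, where now all cubes are tiny and interior.

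For this core step I would build an ambient isotopy $h:I^n\times I\to I^n$ with $h_0=\mathrm{id}$, each $h_t$ a homeomorphism fixing $\partial I^n$ pointwise, and $h_1|_{R'_k}=L_{R'_k,S'_k}$ for every $k$. Given such an $h$, set $F=\prod_{\mathscr{R}'}f_k$ and $H(b,t)=F(h_t^{-1}(b))$. This is a homotopy from $F$ to $F\circ h_1^{-1}=\prod_{\mathscr{S}'}f_k$, where the last equality uses $L_{R'_k,I^n}\circ L_{S'_k,R'_k}=L_{S'_k,I^n}$; it is constant at $x$ on $\partial I^n$ because $h_t$ fixes $\partial I^n$ while every $R'_k$ lies in $(0,1)^n$, and its image lies in $Im(F)\subseteq\bigcup_k Im(f_k)$.

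The main obstacle is the construction of $h$, and this is exactly where $n\geq2$ is essential. Because $n\geq2$, the complement in $(0,1)^n$ of finitely many disjoint cubes is path connected, so each cube can be routed past the others; making the $R'_k$ small enough (via the shrinking step) that thin tubes fit, I would move the cubes one at a time, each along a path avoiding the remaining and already-placed cubes, with the net effect on $R'_k$ being the translation-and-axiswise-rescaling $L_{R'_k,S'_k}$, inserting a temporary ``parking'' stage if a target $S'_k$ initially overlaps an unmoved cube. Equivalently, the configuration space of $m$ disjoint labeled interior cubes is path connected for $n\geq2$, and a path of configurations promotes to the ambient isotopy $h$ by isotopy extension. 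Ensuring each elementary tube isotopy fixes $\partial I^n$ and is supported away from the cubes not currently in motion yields $h$ and completes the proof.
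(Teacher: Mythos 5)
Your proof is correct in substance, but it takes a genuinely different route from the paper's. The paper proves Lemma \ref{eckmannhiltonlemma} using nothing beyond Lemma \ref{shrinkingcubelemma}: since there are only finitely many cubes, one can choose pairwise disjoint intervals $[c_k,d_k]\subseteq J_{n}^{k}$ in the last coordinate, and then pass through the chain $R_k\supseteq A_k\subseteq B_k\supseteq C_k\subseteq D_k$ (squeeze $R_k$ into the thin slab $\prod_{i=1}^{n-1}J_{i}^{k}\times[c_k,d_k]$, expand to the full horizontal slab, shrink into the target vertical slab, expand to $D_k=\left[\frac{\phi^{-1}(k)-1}{m},\frac{\phi^{-1}(k)}{m}\right]\times I^{n-1}$); four applications of the shrinking lemma, used in both directions, finish the proof with no isotopies, no connectivity of complements, and no general position. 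You instead prove the symmetric two-domain statement first (this is exactly the unrestricted finite case of Theorem \ref{infiniteeckmannhiltontheorem}, which the paper derives \emph{from} this lemma in Corollary \ref{finitecasecor}), realizing the rearrangement by an ambient isotopy rel $\partial I^n$ obtained from connectivity of the labelled little-cubes configuration space plus isotopy extension, or by tube-by-tube motions with a parking stage. That is where all the work in your argument hides: topological isotopy extension is heavy machinery, and the by-hand tube construction is precisely the style of argument the paper reserves for the one place it is forced to use it, the restricted case (Lemma \ref{twocyclelemma}). Note also that the ambient isotopy $h$ and its inverse are avoidable: a continuous path of disjoint cube configurations from $\{R_k'\}$ to $\{S_k'\}$ already defines the homotopy directly, by sending $b\in R_k(t)$ to $f_k(L_{R_k(t),I^n}(b))$ and everything else to $x$; this is the ``rigid'' form described in Remark \ref{rigidityremark} and would let you drop isotopy extension entirely. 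What your route buys is the stronger symmetric statement in one step, in rigid isotopic form; what the paper's route buys is economy and self-containment, since only Lemma \ref{shrinkingcubelemma} is needed.
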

\begin{proof}
Write $R_k=\prod_{i=1}^{n}J_{i}^{k}$ and find pairwise disjoint sets $[c_1,d_1],[c_2,d_2],\dots,[c_m,d_m]$ in $I$ such that $[c_{k},d_{k}]\subseteq J_{n}^{k}$ for all $k\in\{1,2,\dots,m\}$. Consider the following sequence of finite $n$-domains.
\begin{enumerate}
\item $A_k=\prod_{i=1}^{n-1}J_{i}^{k}\times [c_{k},d_{k}]$ and $\scra=\{A_1,A_2,\dots,A_m\}$,
\item $B_k=\prod_{i=1}^{n-1}I\times [c_{k},d_{k}]$ and $\scrb=\{B_1,B_2,\dots,B_m\}$,
\item $C_k=\left[\frac{\phi^{-1}(k)-1}{m},\frac{\phi^{-1}(k)}{m}\right]\times\prod_{i=2}^{n-1}I\times [c_{k},d_{k}]$ and $\scrc=\{C_1,C_2,\dots,C_m\}$,
\item $D_k=\left[\frac{\phi^{-1}(k)-1}{m},\frac{\phi^{-1}(k)}{m}\right]\times\prod_{i=2}^{n}I$ and $\scrd=\{D_1,D_2,\dots,D_m\}$.
\end{enumerate}
Each one of $\scrr,\scra,\scrb,\scrc,\scrd$ is an $n$-domain where we have pairwise inclusions $R_k\supseteq A_k\subseteq B_k \supseteq C_k\subseteq D_k$ for all $k\in\{1,2,\dots,m\}$. By Lemma \ref{shrinkingcubelemma}, we have
\[\prod_{\mathscr{R}}f_k\simeq \prod_{\mathscr{A}}f_k\simeq \prod_{\mathscr{B}}f_k\simeq \prod_{\mathscr{C}}f_k\simeq \prod_{\mathscr{D}}f_k=\prod_{k=1}^{m}f_{\phi(k)}\]where all homotopies may be chosen to have image in $\bigcup_{k=1}^{m}Im(f_k)$.
\end{proof}

\begin{lemma}\label{twocyclelemma}
Suppose $\scrr=\{R_1,R_2,R_3,\dots, R_m\}$ and $\scrs=\{S_1,S_2,\dots S_m\}$ are $n$-domains such that $R_k=S_k$ for all $k$ except possibly one value $k_0\in \{1,2,\dots ,m\}$. If $f_1,f_2,\dots ,f_m\in \Omega^n(X,x)$, and $F\subseteq \{1,2,\dots ,m\}\backslash\{k_0\}$ is such that $I^n\backslash \bigcup_{k\in F}R_k$ is path-connected, then there is a homotopy $\prod_{\scrr}f_k\simeq \prod_{\scrs}f_{k}$ with image in $\bigcup_{k=1}^{m}Im(f_k)$ and which is the constant homotopy on $\bigcup_{k\in F}R_k$.
\end{lemma}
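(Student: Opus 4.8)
The plan is to reduce, via the $n$-domain shrinking Lemma, to the problem of transporting a single very small cube across $I^n$ while avoiding a finite set of obstacles, and then to route that cube through the path-connected region $U=I^n\setminus\bigcup_{k\in F}R_k$, using $n\geq 2$ to get around the remaining cubes.

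First I would clear everything out of the way except the cubes indexed by $F$. For each $k\notin F$ let $c_k$ be the center of $R_k$ and let $c_{k_0}'$ be the center of $S_{k_0}$; since the interiors are pairwise disjoint and $int(R_k)\subseteq int(I^n)$ is disjoint from the $F$-cubes, all these points lie in the open, path-connected set $U$. Because $n\geq 2$ and $U$ is a connected $n$-manifold, deleting the finite interior point set $\{c_k\mid k\notin F\cup\{k_0\}\}$ leaves $U$ path-connected, so I can choose a path $\gamma$ in $U$ from $c_{k_0}$ to $c_{k_0}'$ missing those points. By compactness $\gamma(I)$ has positive sup-distance $\rho$ to the complement of $U$ and positive sup-distance $\delta$ to each deleted $c_k$. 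Fixing $r<\tfrac12\min(\rho,\delta)$ small enough that the closed cube of half-side $r$ about any $c_k$ (resp.\ $c_{k_0}'$) lies in $int(R_k)$ (resp.\ $int(S_{k_0})$), I define a sub-$n$-domain $\scrr^{*}$ of $\scrr$ by keeping $R_k$ for $k\in F$ and replacing each $R_k$ ($k\notin F$) by the half-side-$r$ cube $R_k^{*}$ centered at $c_k$; define $\scrs^{*}$ identically except that $S_{k_0}^{*}$ is the half-side-$r$ cube centered at $c_{k_0}'$. Note $R_k^{*}=S_k^{*}$ for all $k\neq k_0$. By Lemma \ref{shrinkingcubelemma} there are homotopies $\prod_{\scrr}f_k\simeq\prod_{\scrr^{*}}f_k$ and $\prod_{\scrs^{*}}f_k\simeq\prod_{\scrs}f_k$, each with image in $\bigcup_k Im(f_k)$ and, since the $F$-cubes are left unchanged, each constant on $\bigcup_{k\in F}R_k$.

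It then remains to build a homotopy $\prod_{\scrr^{*}}f_k\simeq\prod_{\scrs^{*}}f_k$ sliding the single cube $R_{k_0}^{*}$ to $S_{k_0}^{*}$ while fixing every other cube. Writing $M_t$ for the half-side-$r$ cube centered at $\gamma(t)$, the choice of $r$ guarantees $M_0=R_{k_0}^{*}$, $M_1=S_{k_0}^{*}$, and that each $M_t$ lies in $U$ and is disjoint from every fixed cube $R_k^{*}$ ($k\neq k_0$) and from every $F$-cube. I would define $H:I^n\times I\to X$ to equal the time-independent maps $f_k\circ L_{R_k^{*},I^n}$ on the fixed cubes, to equal $f_{k_0}\circ L_{M_t,I^n}$ on $M_t$, and to equal $x$ elsewhere. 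Continuity follows from the pasting lemma once one checks the only nontrivial interface, the moving boundary $\partial M_t$: there $L_{M_t,I^n}$ lands in $\partial I^n$, so $f_{k_0}$ returns $x$ and matches the surrounding constant value. The image of $H$ lies in $\bigcup_k Im(f_k)$; $H$ is constant on $\bigcup_{k\in F}R_k$ because each $M_t$ avoids the $F$-cubes; and $H_0=\prod_{\scrr^{*}}f_k$, $H_1=\prod_{\scrs^{*}}f_k$. Concatenating the three homotopies yields the desired $\prod_{\scrr}f_k\simeq\prod_{\scrs}f_k$.

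The main obstacle is this sliding step, and specifically the interplay of two distinct connectivity needs: the $F$-cubes must never be crossed, which is handled by the hypothesis that $U$ is path-connected, whereas the remaining non-$F$ cubes could a priori block every route and are handled by first shrinking them and invoking the fact that, for $n\geq 2$, deleting finitely many interior points (a fortiori sufficiently small cubes) from a connected $n$-manifold leaves it path-connected. Care is needed to fix $r$ only after $\gamma$ is chosen, so that the single parameter $r$ simultaneously makes all shrunken cubes pairwise disjoint, keeps the moving tube $\bigcup_t M_t$ inside $U$, and forces the endpoints of the slide to coincide exactly with $R_{k_0}^{*}$ and $S_{k_0}^{*}$.
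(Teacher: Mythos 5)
Your proof is correct in substance and follows the same basic strategy as the paper---use Lemma \ref{shrinkingcubelemma} to pass to convenient small sub-cubes and then slide $f_{k_0}$ along a moving cube that follows a path in the complement of the $F$-cubes---but the two arguments differ in ways worth recording. The paper shrinks only $R_{k_0}$ and $S_{k_0}$, finds \emph{two} disjoint polygonal arcs between them in $(0,1)^n\backslash\bigcup_{k\in F}R_k$, and runs a pair of isotopies that swap the two cubes through thickenings of those arcs; your single one-way slide suffices, since at time $0$ the cube $S_{k_0}$ carries only the constant map (the paper's second isotopy, and its undefined ``$f_{k_1}$'', look like vestiges of a transposition argument). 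More importantly, you explicitly handle the cubes $R_k$, $k\notin F\cup\{k_0\}$: you shrink them to tiny cubes about their centers, choose $\gamma$ avoiding those finitely many centers (this is exactly where $n\geq 2$ enters), and only then fix the common scale $r$, so that the sliding tube misses every static cube and the homotopy is literally stationary on all of them. The paper's written proof never addresses these cubes: its isotopies are required to avoid only the $F$-cubes, and off the moving tubes its homotopy is defined to equal the concatenation of the maps $f_k$ with $k\in F$ alone, so its time-$0$ and time-$1$ slices fail to agree with $\prod_{\scrr}f_k$ and $\prod_{\scrs}f_k$ on any cube $R_k$, $k\notin F\cup\{k_0\}$, that carries a nonconstant map---and such cubes do occur in the application, Corollary \ref{finitecasecor}. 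Your ``route around finitely many points'' step is precisely what is needed to close that gap, so on this point your argument is more complete than the paper's.

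One detail needs patching: the sliding cubes $M_t$ must remain inside $I^n$, or else $L_{M_t,I^n}$ is undefined and the intermediate stages need not send $\partial I^n$ to $x$. Your $\rho$ controls only the distance from $\gamma$ to the $F$-cubes, and a path chosen in $U=I^n\backslash\bigcup_{k\in F}R_k$ may run along $\partial I^n$. The remedy, implicit in the paper's use of the open manifold $(0,1)^n\backslash\bigcup_{k\in F}R_k$, is to choose $\gamma$ inside $(0,1)^n\backslash\bigcup_{k\in F}R_k$ minus the centers: this set is still path connected, because a path in $U$ between interior points can be pushed off $\partial I^n$ by clamping all coordinates into $[\epsilon,1-\epsilon]$ for small $\epsilon>0$ (the clamp moves points by at most $\epsilon$, fixes the endpoints and the centers, and a compact path keeps positive distance from the finitely many closed $F$-cubes). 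With $\gamma$ so chosen, take $\rho$ to be the distance from $\gamma$ to $\bigl(\bbr^n\backslash(0,1)^n\bigr)\cup\bigcup_{k\in F}R_k$, and the rest of your estimates go through verbatim.
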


\begin{proof}
Supposing that $R_{k_0}\neq S_{k_0}$ (for otherwise the constant homotopy may be used), find $n$-cubes $R_{k_0}'\subseteq R_{k_0}$ and $S_{k_0}'\subseteq S_{k_0}$ such that $R_{k_0}'\cap S_{k_0}'=\emptyset$. Applying Lemma \ref{shrinkingcubelemma}, we may replace $R_{k_0}$ with $R_{k_0}'$ in $\scrr$ and $S_{k_0}$ with $S_{k_0}'$ in $\scrs$. Hence, we may assume that $R_{k_0}\cap S_{k_0}=\emptyset$. Pick $r_0\in int(R_{k_0})$ and $s_0\in int(S_{k_0})$. Since $r_0,s_0$ lie in the simply connected open $n$-manifold $(0,1)^n\backslash \bigcup_{k\in F}R_k$, we may find two polygonal arcs $a_0,a_1\subseteq I^n\backslash \bigcup_{k\in F}R_k$ with endpoints $r_0$ and $s_0$, which may be thickened to neighborhoods $M,N\subseteq (0,1)^n\backslash \bigcup_{k\in F}R_k$ respectively so that $M,N$ are homeomorphic to an $n$-disk and $M\cap N$ has two components homeomorphic to an $n$-disk, one of which lies in $int(R_{k_0})$ and the other in $int(S_{k_0})$. Using these arcs and neighborhoods, it is straightforward to construct isotopies $G_0,G_1:I^n\times I\to I^n\backslash \bigcup_{k\in F}R_k$ such that 
\begin{enumerate}
\item $A_{t}=G_0(I^n\times \{t\})$ and $B_{t}=G_1(I^n\times \{t\})$ are $n$-cubes for all $t\in I$,
\item $A_{0}=B_1=R_{k_0}$, $A_{1}=B_0=S_{k_0}$, 
\item $G_0(I^n\times [0,1/3))\subseteq R_{k_0}$, $G_0(I^n\times (1/4,3/4))\subseteq M$, $G_0(I^n\times (2/3,1])\subseteq S_{k_0}$,
\item $G_1(I^n\times [0,1/3))\subseteq S_{k_0}$, $G_1(I^n\times (1/4,3/4))\subseteq N$, $G_1(I^n\times (2/3,1])\subseteq R_{k_0}$.
\end{enumerate}
Effectively, the isotopies $G_0$ and $G_1$ switch the positions of the $n$-cubes $R_{k_0},S_{k_0}$ so that, at any point during the isotopy, the images of these $n$-cubes do not intersect each other nor any of the $n$-cubes $R_{k}=S_k$, $k\in F$. Let $\scrs=\{R_k\mid k\in F\}$ and put $g=\prod_{\scrs}\{f_k\}_{k\in F}$. Noting that $g(R_{k_0})=g(S_{k_0})=x$, the desired homotopy is the map $H:I^n\times I\to X$ defined so that $H(x,t)=f_{k_0}(G_0(x,t))$ if $(x,t)\in A_t\times \{t\}$, $H(x,t)=f_{k_1}(G_1(x,t))$ if $(x,t)\in B_t\times \{t\}$, and $H(x,t)=g(x)$ if $(x,t)\notin T=\bigcup_{t\in I}(A_t\times \{t\}\cup B_t\times \{t\})$. The well-definedness and continuity of $H$ are routine to verify.
\end{proof}

\begin{corollary}\label{finitecasecor}
Theorem \ref{infiniteeckmannhiltontheorem} holds when $K$ is finite.
\end{corollary}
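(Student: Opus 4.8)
The plan is to deduce Corollary \ref{finitecasecor} by combining Lemma \ref{eckmannhiltonlemma} (which handles arbitrary rearrangements of a finite $n$-domain via the Eckmann-Hilton shuffle) with Lemma \ref{twocyclelemma} (which handles a single-coordinate swap while staying constant on a path-connected-complement family). Since $K$ is finite, we may assume $K=\{1,2,\dots,m\}$ throughout. There are two statements to prove: the \emph{finite unrestricted case} (no constraint set $F$) and the \emph{finite restricted case} (the homotopy must be constant on $\bigcup_{k\in F}R_k$ where $R_k=S_k$ for $k\in F$ and $I^n\backslash\bigcup_{k\in F}R_k$ is path connected).

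For the unrestricted case, I would first apply Lemma \ref{shrinkingcubelemma} to pass to convenient sub-$n$-domains, then invoke Lemma \ref{eckmannhiltonlemma} twice: once to move from $\prod_{\scrr}f_k$ to the standard slab concatenation $\prod_{k=1}^{m}f_{\phi(k)}$ for a suitable permutation $\phi$, and once (running the argument in reverse, i.e. applying the lemma to $\scrs$) to move from $\prod_{\scrs}f_k$ to the same standard concatenation. Concatenating these two homotopies (and the reversal of the second) gives $\prod_{\scrr}f_k\simeq\prod_{\scrs}f_k$ with image in $\bigcup_{k=1}^{m}Im(f_k)$. Equivalently, since Lemma \ref{eckmannhiltonlemma} shows every $\scrr$-concatenation is homotopic to \emph{the} standard concatenation $f_1\cdot f_2\cdots f_m$ (taking $\phi=\mathrm{id}$), both $\prod_{\scrr}f_k$ and $\prod_{\scrs}f_k$ are homotopic to this common model, and transitivity finishes the unrestricted case.

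For the restricted case, I would reduce an arbitrary shuffle fixing $\bigcup_{k\in F}R_k$ to a composition of transpositions, each moved by one application of Lemma \ref{twocyclelemma}. Concretely, the indices outside $F$ can be rearranged by writing the required permutation of the positions $\{S_k\}_{k\notin F}$ as a product of transpositions; but Lemma \ref{twocyclelemma} as stated only relocates a single $n$-cube $R_{k_0}$ (with all others held fixed), so I would instead proceed one index at a time, moving $R_k$ to $S_k$ for each $k\notin F$ successively. At each stage the set of already-placed cubes together with $\{R_k\mid k\in F\}$ must have path-connected complement so that Lemma \ref{twocyclelemma} applies; this is arranged by the $n$-domain shrinking lemma, shrinking each intermediate cube enough that the complement of the growing fixed family stays path connected. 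Composing the finitely many homotopies and checking that each is constant on $\bigcup_{k\in F}R_k$ yields the restricted conclusion.

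The main obstacle I anticipate is bookkeeping the path-connectedness hypothesis across the successive single-cube relocations: Lemma \ref{twocyclelemma} requires $I^n\backslash\bigcup_{k\in F'}R_k$ to be path connected for the current fixed family $F'$, and after placing a cube at position $S_k$ one must ensure the complement of the enlarged family remains path connected before the next relocation. I would handle this by invoking Lemma \ref{shrinkingcubelemma} to replace each newly-positioned cube by a sufficiently small sub-$n$-cube around an interior point, so that removing it (and all previously fixed cubes) cannot disconnect the ambient $(0,1)^n$; this preserves the image constraint and the constancy on $\bigcup_{k\in F}R_k$. Verifying that this shrinking can always be done without disconnecting the complement — and that the finitely many relocations can be ordered compatibly — is the delicate point, but it is geometrically routine once the single-cube case of Lemma \ref{twocyclelemma} is in hand.
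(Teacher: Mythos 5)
Your unrestricted case is precisely the paper's argument (apply Lemma \ref{eckmannhiltonlemma} with $\phi=\mathrm{id}$ to both $\scrr$ and $\scrs$ and pass through the common model $f_1\cdot f_2\cdots f_m$), and your restricted case has the same skeleton as the paper's: move one cube at a time, each relocation performed by Lemma \ref{twocyclelemma}. However, the restricted case as you describe it has a genuine gap: you never ensure that the intermediate hybrid configurations are $n$-domains at all. At the $i$-th step you want to move $R_{k_i}$ to the position $S_{k_i}$ while the cubes $R_{k_j}$, $j>i$, are still in their original positions; but the target family $\{S_{k_1},\dots,S_{k_i},R_{k_{i+1}},\dots,R_{k_p}\}\cup\{R_k\mid k\in F\}$ need not have pairwise disjoint interiors, since $S_{k_i}$ can overlap the unmoved $R_{k_j}$ arbitrarily badly (for instance, when $\scrr$ and $\scrs$ are both cubical decompositions of $I^n$ with the pieces permuted, every $S_k$ meets several $R_{k'}$). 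In that situation Lemma \ref{twocyclelemma} cannot even be invoked for that step, and shrinking the \emph{newly positioned} cube after the move, as you propose, cannot repair this, because the move itself was illegal. The paper's proof closes this hole with one preliminary application of Lemma \ref{shrinkingcubelemma}: shrink all $R_k$ and $S_k$ with $k\notin F$ so that $R_k\cap S_{k'}=\emptyset$ for all $k,k'\notin F$; after this, every hybrid family occurring in the induction is automatically a valid $n$-domain. Your proof needs this step inserted before the induction begins.

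By contrast, the issue you single out as the delicate point --- maintaining path-connectedness of the complement of a ``growing fixed family'' --- is not actually needed. Lemma \ref{twocyclelemma} only requires $I^n\backslash\bigcup_{k\in F'}R_k$ to be path connected for the set $F'$ of cubes on which the homotopy is required to be \emph{constant}; cubes that merely sit still during a step (already placed, or not yet moved) need not be put into $F'$, because the lemma already produces a homotopy between the two full concatenations, agreeing with them on those cubes at the two ends. So at every step you may apply Lemma \ref{twocyclelemma} with the same constant-set $F$ supplied by the hypothesis of the corollary, whose complement is path connected once and for all; this is exactly what the paper does, and it eliminates your entire inductive path-connectedness bookkeeping. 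With the initial disjointness shrinking added and this simplification made, your argument becomes the paper's proof.
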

\begin{proof}
Suppose $K$ is finite, $\mathscr{R}=\{R_k\mid k\in K\}$ and $\mathscr{S}=\{S_k\mid k\in K\}$ are $n$-domains, and $\{f_k\}_{k\in K}$ is a $K$-sequence in $\Omega^{n}(X,x)$. Applying Lemma \ref{eckmannhiltonlemma} to $\scrr$ and $\scrs$ in the case when $\phi$ is the identity gives $\prod_{\mathscr{R}}f_k\simeq\prod_{k=1}^{m}f_{k}\simeq\prod_{\mathscr{S}}f_k$ where both homotopies have image in $\bigcup_{k\in K}Im(f_k)$. 

For the second statement of Theorem \ref{infiniteeckmannhiltontheorem}, fix $n$-domains $\scrr=\{R_1,R_2,\dots R_m\}$ and $\scrs=\{S_1,S_2,\dots, S_m\}$ such that $F=\{k\in K\mid R_k=S_k\}$ is non-empty and $I^n\backslash \bigcup_{k\in F}R_k$ is path connected. Let $K=\{1,2,\dots,m\}\backslash F$ and write $K=\{k_1,k_2,\dots ,k_p\}$. Using Lemma \ref{shrinkingcubelemma} to shrink the $n$-cubes $R_k,S_k$, $k\in K$, we may assume that for all $k,k'\in K$, we have $R_k\cap S_{k'}=\emptyset$. Let $\scrr_0=\scrr$. If we have defined $n$-domain $\scrr_{i-1}=\{R_{1}^{i-1},R_{2}^{i-1},\dots R_{m}^{i-1}\}$, set $R_{k_i}^{i}=S_{k_i}$ and $R_{k}^{i}=R_{k}^{i-1}$ if $k\neq k_i$. Define $\scrr_{i}=\{R_{1}^{i},R_{2}^{i},\dots ,R_{m}^{i}\}$. We continue recursively until we reach $\scrr_{p}=\scrs$. Our application of Lemma \ref{shrinkingcubelemma} ensures that $\scrr^{i}$ is a well-defined $n$-domain for all $1\leq i\leq p$. Lemma \ref{twocyclelemma} gives homotopies
\[\prod_{\scrr}f_k=\prod_{\scrr_0}f_k\simeq \prod_{\scrr_1}f_k\simeq \prod_{\scrr_2}f_k\simeq\,\cdots \,\simeq\prod_{\scrr_p}f_k=\prod_{\scrs}f_k,\] each of which may be chosen to be constant on $\bigcup_{k\in F}R_k=\bigcup_{k\in F}S_k$.\end{proof}

\begin{remark}\label{rigidityremark}
We note that the homotopy $H:I^n\times I\to X$ used to prove Corollary \ref{finitecasecor} is ``rigid" in the sense that each cube $R_k$ in $I^n\times \{0\}$ is moved isometrically through a path of $n$-cubes to $S_k$ in $I^n\times \{1\}$. More precisely, for each $k$, there is an isotopy $G_k:I^n\times I\to I^n$ such that for each $t\in \ui$, there is an $n$-cube $A_t$ with $G_k(s,t)=L_{I^n,A_t}(s)$ and where $A_0=R_k$ and $A_1=S_k$. Then if $s\in A_t$, we have $H(s,t)=f_k(L_{A_t,I^n}(s))$ and $H$ is constant at $x\in X$ otherwise.
\end{remark}

\begin{remark}\label{temptingremark}
It is tempting to think that we might be able prove the first statement of Theorem \ref{infiniteeckmannhiltontheorem} by performing a sequence of homotopies similar to that in the proof of Lemma \ref{eckmannhiltonlemma}. However, this approach fails at the first step where we wish to fix a dimension $i\in\{1,2,\dots, n\}$ and create a sub-$n$-domain $\mathscr{A}$ of $\mathscr{R}=\{R_1,R_2,R_3,\dots\}$ where the $i$-th projections $p_i:I^n\to I$ map the interiors of each element of $\mathscr{A}$ to pairwise disjoint intervals. Indeed, $\mathscr{R}$ might have the property that for every $i\in \{1,2,\dots,n\}$, rational number $q\in I$, and $\epsilon>0$, there exists $R_k\in\scrr$ such that $p_i(R_k)\subseteq [q-\epsilon,q+\epsilon]$. Such an $n$-domain is straightforward to construct (See Figure \ref{baddomain}). If $\mathscr{R}$ has this property and $S_1\subseteq R_1$ is any sub-$n$-cube, then for any given $i\in \{1,2,\dots,n\}$, the open interval $int(p_i(S_1))$ contains a rational $q$ and therefore will contain $p_i(R_k)$ (and the $i$-th projection of any sub-$n$-cube $S_k\subseteq R_k$) for some $k>1$. For such an $n$-domain $\scrr$, it is not possible to make pairwise disjoint selections in any dimension.
\end{remark}
\begin{figure}[H]
\centering \includegraphics[height=2.3in]{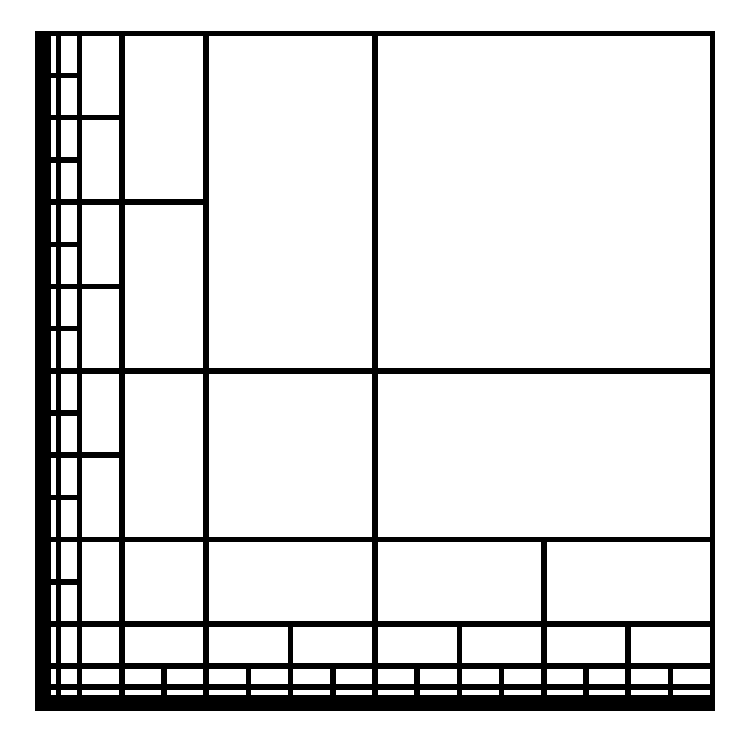}
\caption{\label{baddomain} A $2$-domain for which the proof of Lemma \ref{eckmannhiltonlemma} does not extend to the infinite case.}
\end{figure}
\section{The Infinite Case}\label{sectioninfinitecase}
\begin{definition}
If $R=\prod_{i=1}^{n}[a_i,b_i]$ is an $n$-cube in $(0,1)^n$, let $\mcc(R)$ denote the set of $n$-cubes in $I^n$ of the form $\prod_{i=1}^{n}A_i$ where $A_i\in \{[0,a_i],[a_i,b_i],[b_i,1]\}$. We give $\mcc(R)=\{C_1,C_2,\dots, C_{3^n}\}$ the natural lexicographical ordering inherited from $I^n$ so that the central cube is $C_{(3^n+1)/2}=R$ and $\mcc(R)$ forms a cubical decomposition of $I^n$.
\end{definition}
Notice that $\mcc(R)$ is obtained by subdividing $I^n$ using the hyperplane extensions of the $(n-1)$-dimensional faces of $\partial R$. 
\begin{definition}
An $n$-domain $\mathscr{R}=\{R_k\mid k\in K\}$ is \textit{proper at position $k_0\in K$} if $R_{k_0}\subseteq (0,1)^n$ and if for every $k\neq k_{0}$, we have $R_{k}\subseteq C$ for some $C\in \mcc(R_{k_0})$.
\end{definition}

\begin{remark}\label{propersubdomainremark}
Suppose $\mathscr{R}=\{R_k \mid k\in K\}$ is any $n$-domain and $R_{k_0}\subseteq (0,1)^n$. It is easy to see that there exists a sub-$n$-domain $\mathscr{S}=\{S_k\mid k\in K\}$ of $\mathscr{R}$ that is proper at position $k_0$. In particular, we set $S_k=R_k$ if $R_k$ already lies entirely in some element of $\mcc(R_{k_0})$, including $k=k_0$. If $R_k$ has non-empty intersection with two or more elements of $\mcc(R_{k_0})$, then $k\neq k_0$ and we may choose $S_k$ to be any $n$-cube of the form $R_k\cap C$ where $C\in \mcc(R_{k_0})$.
\end{remark}
\begin{lemma}\label{ntwistlemma}
If $\mathscr{R}=\{R_k\mid k\in\bbn\}$ is an $n$-domain such that $R_1\subseteq (0,1)^n$ and $\{f_k\}_{k\in\bbn}$ is a null-sequence in $\Omega^n(X,x)$, then there exists an $n$-domain $\mathscr{S}=\{S_k\mid k\geq 2\}$ such that $\prod_{\mathscr{R}}f_k\simeq f_{1}\cdot \left( \prod_{\mathscr{S}}\{f_k\}_{k\geq 2}\right)$ by a homotopy in $\bigcup_{k\in\bbn}Im(f_k)$.
\end{lemma}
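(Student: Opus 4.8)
The plan is to reduce the infinite rearrangement entirely to the finite case (Corollary \ref{finitecasecor}) by bundling the infinitely many cubes $R_k$ ($k\geq 2$) into \emph{finitely many} composite maps, one for each cell of the cubical decomposition $\mcc(R_1)$. The point is that $\mcc(R_1)$ has only $3^n$ cells, so the infinite data gets absorbed into the maps assigned to these cells, and no infinite cube-moving is ever performed directly.

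First I would use Remark \ref{propersubdomainremark} together with Lemma \ref{shrinkingcubelemma} to replace $\scrr$ by a sub-$n$-domain that is proper at position $1$; this shrinking homotopy keeps $R_1$ fixed and has image in $\bigcup_k Im(f_k)$. After this reduction I may assume each $R_k$ with $k\geq 2$ lies in a single cell of $\mcc(R_1)=\{C_1,\dots,C_{3^n}\}$, whose central cell is $R_1$ (here the hypothesis $R_1\subseteq(0,1)^n$ is what makes $\mcc(R_1)$ a genuine $3^n$-cell decomposition). Assign to each outer cell $C_j$ the index set $K_j=\{k\geq 2 : R_k\subseteq C_j\}$, choosing one cell per $k$ so that the $K_j$ partition $\{k : k\geq 2\}$. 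Since $\{f_k\}_{k\geq 2}$ is null, each subfamily $\{f_k\}_{k\in K_j}$ is null, so I can form a composite $g_j=\prod_{\widehat{\scrr}^{(j)}}\{f_k\}_{k\in K_j}\in\Omega^n(X,x)$, where $\widehat{\scrr}^{(j)}$ is the $n$-domain obtained by rescaling $\{R_k : k\in K_j\}$ from $C_j$ onto $I^n$. The essential observation is that $\prod_{\scrr}f_k$ equals the $\mcc(R_1)$-concatenation of the finite family consisting of $f_1$ on the central cell and $g_j$ on each outer cell $C_j$: on $R_1$ both restrict to $f_1\circ L_{R_1,I^n}$, and on $C_j$ the identity $g_j\circ L_{C_j,I^n}=\prod_{\scrr}f_k|_{C_j}$ follows from the compatibility of canonical linear homeomorphisms under composition.

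Next I would realize the target in the same finite form. Choosing a cubical decomposition $\scrt=\{[0,1/2]\times I^{n-1},E_1,\dots,E_{3^n-1}\}$ of $I^n$ whose first cell is the left half and whose remaining cells subdivide the right half, I can define $\scrs=\{S_k : k\geq 2\}$ by placing inside the appropriate (suitably pre-rescaled) slab a rescaled copy $S_k$ of $R_k$ for each $k\in K_j$. A direct check then shows $f_1\cdot\left(\prod_{\scrs}\{f_k\}_{k\geq 2}\right)$ equals the $\scrt$-concatenation of the very same finite family $\{f_1\}\cup\{g_j\}$, with $f_1$ on the left half and $g_j$ on $E_j$. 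Thus both $\prod_{\scrr}f_k$ and $f_1\cdot\prod_{\scrs}f_k$ are $n$-domain concatenations over cubical decompositions of $I^n$ using one and the same finite sequence of maps $\{f_1\}\cup\{g_j\}$, differing only in the placement of the cells. Applying Corollary \ref{finitecasecor} yields a homotopy between them with image in $Im(f_1)\cup\bigcup_j Im(g_j)\subseteq\bigcup_k Im(f_k)$, and concatenating with the shrinking homotopy from the first step finishes the argument.

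I expect the main obstacle to be bookkeeping rather than geometry: verifying that each $g_j$ is a well-defined (continuous) element of $\Omega^n(X,x)$, which hinges on nullity descending to each subfamily $\{f_k\}_{k\in K_j}$, and carefully tracking the two layers of linear rescaling (into $C_j$ for the source, and into the right-half slabs for the target) so that both $\prod_{\scrr}f_k$ and $f_1\cdot\prod_{\scrs}f_k$ genuinely coincide with finite concatenations of the $g_j$. Once this identification is in place, all of the actual cube-shuffling is delegated to the already-established finite case.
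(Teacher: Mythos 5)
Your proposal is correct and takes essentially the same route as the paper's proof: pass to a sub-$n$-domain proper at position $1$ via Remark \ref{propersubdomainremark} and Lemma \ref{shrinkingcubelemma}, bundle the infinitely many cubes in each cell of $\mcc(R_1)$ into a single map $g_j\in\Omega^n(X,x)$, and delegate the rearrangement of the resulting finite family $\{f_1\}\cup\{g_j\}$ to the finite case, reading off $\mathscr{S}$ from where the rescaled cubes land. The only cosmetic difference is that the paper applies Lemma \ref{eckmannhiltonlemma} with a permutation placing $f_1$ first and extracts $\mathscr{S}$ afterward, whereas you pre-specify the target cubical decomposition and invoke Corollary \ref{finitecasecor}; both amount to the same finite Eckmann--Hilton shuffle.
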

\begin{proof}
Applying Remark \ref{propersubdomainremark}, find a sub-$n$-domain $\mathscr{R}'=\{R_k'\mid k\in\bbn\}$ of $\mathscr{R}$, which is proper at $k=1$. In particular, we have $R_1'=R_1$. By Lemma \ref{shrinkingcubelemma}, there exists a cube-shrinking homotopy $\prod_{\mathscr{R}}f_k\simeq \prod_{\mathscr{R}'}f_k=g$ where the homotopy has image in $\bigcup_{k\in\bbn}Im(f_k)$. Since each $R_{k}'$ lies in some element of the cubical decomposition $\mcc(R_1)=\{C_1,C_2,\dots ,C_{3^n}\}$ of $I^n$, each map $g_j=g|_{C_j}\circ L_{I^n,C_j}$ is an element of $\Omega^{n}(X,x)$. Moreover, if $K_j=\{k\in \bbn\mid R_{k'}\subseteq C_j\}$, then $\mathscr{S}_j=\{L_{C_j,I^n}(R_{k}')\}_{k\in K_j}$ is an $n$-domain such that $g_j=\prod_{\mathscr{S}_j}\{f_k\}_{k\in K_j}$. In particular, $C_{(3^n+1)/2}=R_1$ and $g_{(3^n+1)/2}=f_1$.

Choose a bijection $\phi\in \Sigma_{3^n}$ such that $\phi(1)=(3^n+1)/2$. Applying Lemma \ref{eckmannhiltonlemma}, we have $g\simeq h=\prod_{j=1}^{3^n}g_{\phi(j)}=f_1\cdot g_{\phi(2)}\cdot g_{\phi(3)}\cdots g_{\phi(3^n)}$ by a homotopy with image in $\bigcup_{j=1}^{3^n}Im(g_j)=\bigcup_{k\in\bbn}Im(f_k)$. Let $A_j=\left[\frac{j-1}{3^n},\frac{j}{3^n}\right]\times I^{n-1}$ and for each $k\in\{2,3,4,\dots\}$, if $R_{k}'\subseteq C_j$, set $S_k'=L_{C_j,A_j}(R_{k}')$. Let $A_{\geq 2}=\bigcup_{j=2}^{3^n}A_j$ and define $h'=h|_{A_{\geq 2}}\circ L_{I^n,A_{j\geq 2}}:I^n\to X$ so that $h'$ is a first-coordinate reparameterization of $g_{\phi(2)}\cdot g_{\phi(3)}\cdots g_{\phi(3^n)}$. Finally, setting $S_k=L_{I,A_{\geq 2}}(S_k')$, we have the desired $n$-domain $\mathscr{S}=\{S_2,S_3,S_4,\dots\}$ for which $h'=\prod_{\mathscr{S}}\{f_k\}_{k\geq 2}$.
\end{proof}
\begin{definition}\label{concatenationdef}
The \textit{standard $n$-domain} is the $n$-domain $\scrr=\{R_k\mid k\in\bbn\}$ where $R_k=\left[\frac{k-1}{k},\frac{k}{k+1}\right]\times I^n$. The \textit{infinite concatenation} of a null-sequence $\{f_k\}_{k\in \bbn}$ in $\Omega^{n}(X,x)$ is the $\scrr$-concatenation $\prod_{\scrr}f_k$ where $\scrr$ is the standard $n$-domain. We will typically denote this map as $\prod_{k=1}^{\infty}f_k$.
\end{definition}
\begin{lemma}\label{eckhiltontechlemma}
If $\mathscr{R}=\{R_k\mid k\in\bbn\}$ is an infinite $n$-domain and $\{f_k\}_{k\in \bbn}$ is a null-sequence in $\Omega^n(X,x)$, then $\prod_{\mathscr{R}}f_k\simeq \prod_{k=1}^{\infty}f_k$ by a homotopy in $\bigcup_{k=1}^{\infty}Im(f_k)$.
\end{lemma}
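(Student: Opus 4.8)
The plan is to peel off the maps $f_1,f_2,f_3,\dots$ one at a time using Lemma \ref{ntwistlemma}, to splice the resulting homotopies into a single homotopy whose continuity at the limit is controlled by the null condition, and finally to correct the parametrization by a reparametrizing isotopy. It is convenient to aim first for the \emph{dyadic} concatenation $\prod_{\scrd}f_k$, where $\scrd=\{D_k\mid k\in\bbn\}$ and $D_k=\left[1-2^{-(k-1)},1-2^{-k}\right]\times I^{n-1}$, and then to pass from $\prod_{\scrd}f_k$ to $\prod_{k=1}^{\infty}f_k$. The last passage is easy: if $\psi$ is the increasing self-homeomorphism of $\ui$ fixing $\{0,1\}$, linear on each $\left[\frac{k-1}{k},\frac{k}{k+1}\right]$, with $\psi\left(\frac{k-1}{k}\right)=1-2^{-(k-1)}$, then $\Phi=\psi\times\mathrm{id}_{I^{n-1}}$ is a self-homeomorphism of $I^n$ fixing $\partial I^n$ that carries each cube $\left[\frac{k-1}{k},\frac{k}{k+1}\right]\times I^{n-1}$ of the standard $n$-domain (Definition \ref{concatenationdef}) onto $D_k$ by the canonical linear homeomorphism; hence $\left(\prod_{\scrd}f_k\right)\circ\Phi=\prod_{k=1}^{\infty}f_k$. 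Since $\Phi$ is isotopic to the identity rel $\partial I^n$ (e.g.\ via $\psi_t=(1-t)\,\mathrm{id}+t\psi$), precomposition produces a homotopy $\prod_{\scrd}f_k\simeq\prod_{k=1}^{\infty}f_k$ with image in $\bigcup_{k}Im(f_k)$.

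For the main reduction I would build the homotopy $\prod_{\scrr}f_k\simeq\prod_{\scrd}f_k$ by induction on the index being peeled. Set $T_k=\left[1-2^{-(k-1)},1\right]\times I^{n-1}$ (so $T_1=I^n$) and $J_k=\left[1-2^{-(k-1)},1-2^{-k}\right]$; note $\bigcup_{k}J_k=[0,1)$ and that the first half of $T_k$ is exactly $D_k$ while its second half is $T_{k+1}$. Starting from $\scrr^{(0)}=\scrr$, suppose inductively that the tail sequence $\{f_j\}_{j\geq k}$ has been arranged into an $n$-domain $\scrr^{(k-1)}$ living in $T_k$, with $f_1,\dots,f_{k-1}$ already placed in $D_1,\dots,D_{k-1}$. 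On the cube $T_k$ I would first apply Lemma \ref{shrinkingcubelemma} to move the leading cube (the one carrying $f_k$) into the interior of $T_k$, and then apply Lemma \ref{ntwistlemma}, reindexed so the tail begins at $1$ and rescaled from $I^n$ to $T_k$. This yields a homotopy, supported in $T_k$ and equal to $x$ on $\partial T_k$ throughout (the interior-shrink guarantees the constructions of Lemmas \ref{shrinkingcubelemma} and \ref{ntwistlemma} are boundary-relative here), carrying $f_k$ into $D_k$ by $L_{D_k,I^n}$ and arranging $\{f_j\}_{j\geq k+1}$ into a new $n$-domain $\scrr^{(k)}$ in $T_{k+1}$. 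Performing this stage-$k$ homotopy during the time interval $J_k$ and keeping it constant in time off $T_k$, the stages agree at the shared endpoints $1-2^{-k}$ and match across the interface faces (where all maps equal $x$), so they glue to a map $H\colon I^n\times[0,1)\ra X$; I then set $H(\,\cdot\,,1)=\prod_{\scrd}f_k$.

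The heart of the matter, and the step I expect to be the main obstacle, is the continuity of $H$ at time $s=1$, which is exactly where the null condition becomes indispensable. Continuity on $I^n\times[0,1)$ is automatic since each point lies in finitely many completed stages. If $b\in I^n$ has first coordinate $<1$, then a neighborhood of $b$ is disjoint from $T_m$ for all large $m$, so $H$ is eventually constant in $s$ there and continuous at $(b,1)$ with value $\prod_{\scrd}f_k(b)$. The delicate case is the accumulation locus, where $b$ has first coordinate $1$ and $H(b,1)=x$: given a neighborhood $U$ of $x$, the null hypothesis provides $k$ with $Im(f_j)\subseteq U$ for all $j\geq k$, and then the basic neighborhood $\{b'\colon b_1'>1-2^{-(k-1)}\}\times\left(1-2^{-(k-1)},1\right]$ of $(b,1)$ maps into $U$, because every point in it is governed either by a stage-$m$ homotopy with $m\geq k$, whose image lies in $\bigcup_{j\geq m}Im(f_j)\subseteq U$, or by an already-placed $f_l$ with $l\geq k$, whose image lies in $U$. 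Thus $H$ is continuous, and since stage $k$ has image in $\bigcup_{j\geq k}Im(f_j)$, the image of $H$ lies in $\bigcup_{k}Im(f_k)$; together with the reparametrization above, this proves the lemma. The conceptual payoff of this organization is that confining each stage to its tail cube $T_k$ simultaneously makes the stages glue and forces only far-tail maps (which are uniformly close to $x$) to appear near the accumulation locus, which is precisely what lets the null condition replace the intricate explicit homotopies of \cite{EK00higher}.
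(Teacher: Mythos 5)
Your proposal is correct and takes essentially the same route as the paper: both arguments recursively peel off one map at a time via Lemma \ref{ntwistlemma}, confine stage $k$ to a shrinking tail cube while the already-placed maps are held constant in time, and invoke the null condition exactly once, to verify continuity of the glued homotopy at the accumulation locus. The only differences are bookkeeping --- you run time in the opposite direction and first target the dyadic concatenation (so that the midpoint splitting produced by Lemma \ref{ntwistlemma} aligns exactly with your tail cubes $T_k$), correcting to $\prod_{k=1}^{\infty}f_k$ afterward by a reparametrizing isotopy, whereas the paper splices the homotopies $H_m$ and constant homotopies $G_m$ directly into blocks modeled on the standard $n$-domain.
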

\begin{proof}
Set $\scrr_0=\scrr$. Replacing each $R_k$ with a sub-$n$-cube $R_{k}'\subseteq int(R_k)$ and applying Lemma \ref{shrinkingcubelemma}, we may assume that $R_k\subseteq (0,1)^n$ for all $R_k\in\mathscr{R}_0$. By Lemma \ref{ntwistlemma}, there exists an $n$-domain $\mathscr{R}_1$ over $\{2,3,4,\dots\}$ and a homotopy $H_1:I^n\times I\to X$ from $f_{1}\cdot\left( \prod_{\scrr_1}\{f_k\}_{k\geq 2}\right)$ to $\prod_{\scrr_{0}}f_k$ with $Im(H_1)\subseteq \bigcup_{k\in\bbn}Im(f_k)$. Observe from the proof of Lemma \ref{ntwistlemma} that since $R_k\subseteq (0,1)^n$ for all $R_k\in\mathscr{R}$, we have $S\subseteq (0,1)^n$ for all $S\in \mathscr{R}_1$. Applying Lemma \ref{ntwistlemma} recursively, we obtain a sequence of $n$-domains $\scrr_1,\scrr_2,\scrr_3,\dots$ (where $\scrr_m$ is an $n$-domain over $\{m+1,m+2,m+3,\dots\}$) and a sequence of homotopies $H_1,H_2,H_3,\dots:I^n\times I\to X$ such that $H_m$ is a homotopy from $f_{m}\cdot\left( \prod_{\scrr_m}\{f_k\}_{k\geq m+1}\right)$ to $\prod_{\scrr_{m-1}}\{f_m\}_{k\geq m}$ with $Im(H_m)\subseteq \bigcup_{k\geq m}Im(f_k)$. This induction is possible due to our initial choice of $\scrr$ and since from the definition of the homotopy $H_m$ obtained using Lemma \ref{ntwistlemma} we have that ($\forall$ $S\in \scrr_{m-1}$, $S\subseteq (0,1)^n$) $\Rightarrow$ ($\forall$ $S\in \scrr_{m}$, $S\subseteq (0,1)^n$).

In order to construct a homotopy $H$ from $\prod_{k=1}^{\infty}f_k$ to $\prod_{\mathscr{R_0}}f_k$, we perform an appropriate gluing of the infinite sequence of homotopies $\{H_m\}_{m\in\bbn}$. Let $G_m:I^n\times I\to X$, $G(x,t)=f_m(x)$ be the constant homotopy of $f_m$ for all $m\in\bbn$. For simplicity, define the following subsets of $I^{n}\times I$:
\begin{itemize}
\item $A_m=\left[1-\frac{1}{m},1\right]\times I^{n-2}\times \left[\frac{1}{m+1},\frac{1}{m}\right]$,
\item $B_m=\left[1-\frac{1}{m},1-\frac{1}{m+1}\right]\times I^{n-2}\times \left[0,\frac{1}{m+1}\right]$,
\item $C=\{1\}\times I^{n-2}\times \{0\}$.
\end{itemize}
Notice that $I^n\times I=\bigcup_{n\in\bbn}A_n\cup \bigcup_{n\in\bbn}\left(B_n\times \left[0,\frac{1}{m+1}\right]\right)\cup C$. We define $H$ by setting:
\begin{itemize}
\item $H|_{A_m}\circ L_{I^{n+1},A_m}=H_m$,
\item $H|_{B_m}\circ L_{I^{n+1},B_m}=G_m$,
\item $H(C)=x$.
\end{itemize}
It is straightforward to check that $H$ is well-defined, $H(a,0)=\left(\prod_{k=1}^{\infty}f_k\right)(a)$, and $H(a,1)=\left(\prod_{\mathscr{R_0}}f_k\right)(a)$. Since each $n$-cube in the set $\{A_m,B_m\mid m\in\bbn\}$ intersects at most finitely others from this same set, $H$ is clearly continuous at all points in $I^{n+1}\backslash C$. To check continuity at the points of $C$, let $U$ be an open neighborhood of $x$. Find an $M$ such that $Im(f_m)\subseteq U$ for all $m\geq M$. Then we have $Im(G_m)=Im(f_m)\subseteq U$ and $Im(H_m)\subseteq \bigcup_{k\geq m}Im(f_k)\subseteq U$ all $m\geq M$. Then $C\cup \bigcup_{m\geq M}(A_m\cup B_m)$ contains a neighborhood $V$ of $C$, and thus $f(V)\subseteq U$, completing the proof.
\end{proof}
\begin{figure}[H]
\centering \includegraphics[height=2.5in]{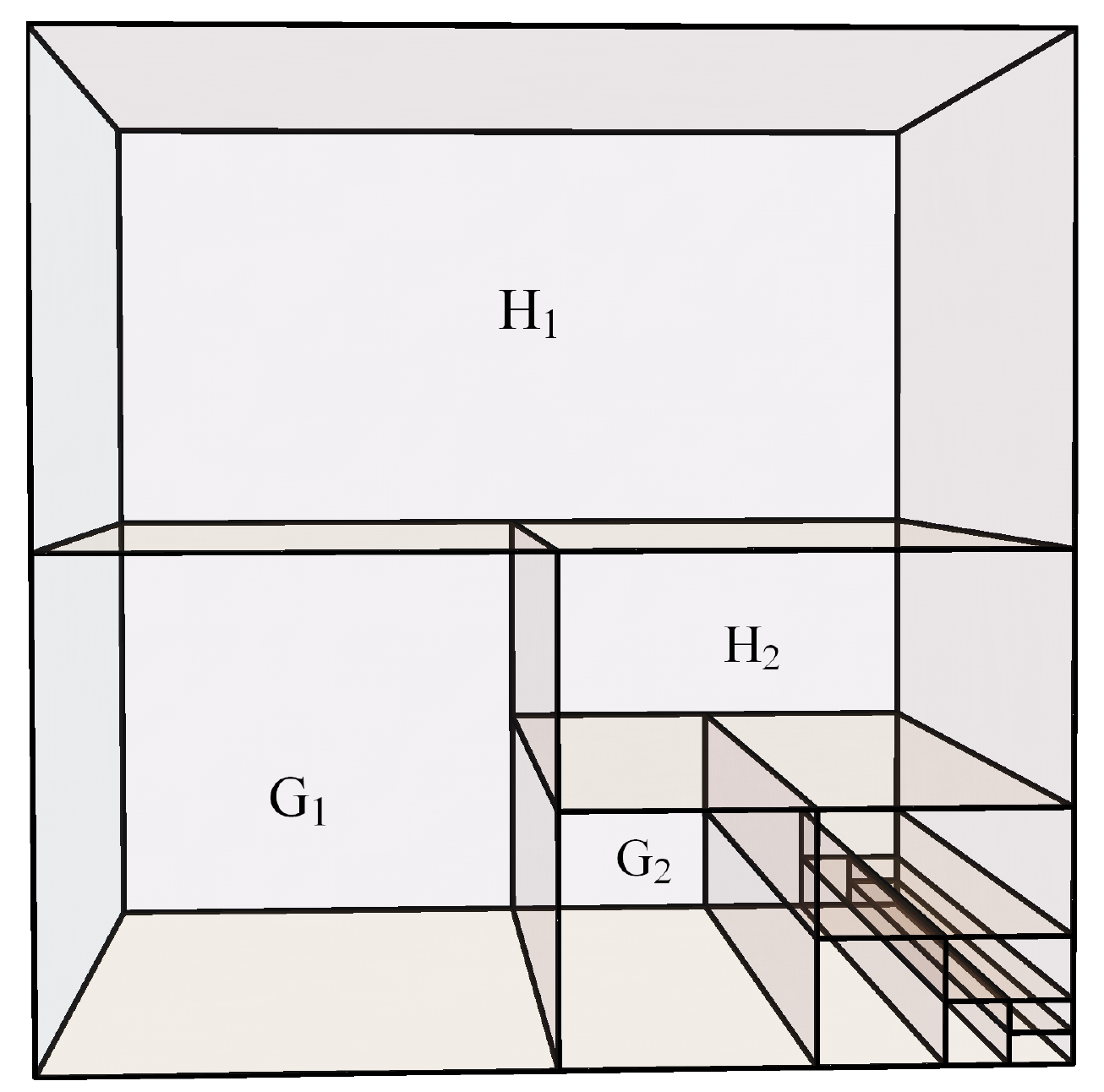}
\caption{\label{domain3} The homotopy $H$ constructed in the proof of Lemma \ref{eckhiltontechlemma} in the case $n=2$. The top of the cube is the domain of $\prod_{\mathscr{R}}f_k$. The homotopy $H_1$ moves $R_1$ to the top of the cubical domain of $G_1$, which is the constant homotopy of $f_1$. Similarly $H_2$ shuffles the image of $R_2$ under $H_1$ to the top of the domain of $G_2$ and so on.}
\end{figure}
\begin{proof}[Proof of Theorem \ref{infiniteeckmannhiltontheorem}]
The finite case is given by Corollary \ref{finitecasecor}. Supposing $K$ is infinite, we may assume without loss of generality that $K=\bbn$.

\textbf{Unrestricted Case:} Suppose $\mathscr{R}=\{R_1,R_2,R_3,\dots\}$ and $\mathscr{S}=\{S_1,S_2,S_3,\dots\}$ are infinite $n$-domains and $\{f_k\}_{k\in\bbn}$ is a null-sequence in $\Omega^n(X,x)$. Applying Lemma \ref{infiniteeckmannhiltontheorem} twice gives  $\left[\prod_{\mathscr{R}}f_k\right]=\left[ \prod_{k=1}^{\infty}f_k\right]=\left[\prod_{\mathscr{S}}f_k\right]$.

\textbf{Restricted Case:} For the second statement of Theorem \ref{infiniteeckmannhiltontheorem}, suppose that $F$ is a non-empty finite subset of $\bbn$ such that $S_k=R_k$ for all $k\in F$ and such that $I^n\backslash \bigcup_{k\in F}R_k$ is path connected. Let $K=\bbn\backslash F$ and put $\alpha=\prod_{\scrr}f_k$ and $\beta=\prod_{\scrs}f_k$. We break the remainder of the proof into three steps.

\textit{Step 1:} Fix a cubical decomposition $\scrc=\{C_1,C_2,C_3,\dots ,C_m\}$ of $I^n$ such that $\{R_k\mid k\in F\}\subseteq \scrc$. The $n$-domain $\scrr$ may form an infinite cubical decomposition of $I^n$ and so we must first work to replace $\scrc$ with a finer cubical decomposition, one element of which meets none of the $R_k$. Let $F'=\{j\in \{1,2,\dots ,m\}\mid C_j=R_k\text{ for }k\in F\}$. Additionally, fix some $k_0\in K$ and find an ``auxiliary" $n$-cube $A\subseteq int(R_{k_0})$. If there exists $k_1\in K$ such that $int(R_{k_0})\cap int(S_{k_1})\neq\emptyset$, fix this $k_1$ and choose $A\subseteq int(R_{k_0})\cap int(S_{k_1})$

We replace $\scrr$ and $\scrs$ with convenient sub-$n$-domains: Let $S_k'=R_k'=R_k=S_k$ if $k\in F$ and if $k\in K$, find $n$-cubes $R_k'$ and $S_k'$ such that
\begin{enumerate}
\item $R_k'\subseteq int(R_k)$ and $S_k'\subseteq int(S_k)$,
\item $R_k'$ and $S_k'$ lie entirely within some element of $\scrc$,
\item $(R_k'\cup S_k')\cap A=\emptyset$.
\end{enumerate}
It is easy to choose $R_k'$ and $S_k'$ so that (1) and (2) are satisfied. Once this is done, (3) is achieved by altering the choice of $R_{k_0}'$ so that it lies within $int(R_{k_0})\backslash A$. If $k_1$ was fixed such that $A\subseteq int(R_{k_0})\cap int(S_{k_1})$, then we alter the choice of $S_{k_1}'$ so that it lies within $int(S_{k_1})\backslash A$. By Lemma \ref{shrinkingcubelemma}, we may replace $\scrr$ with $\{R_k'\mid k\in\bbn\}$ and $\scrs$ with $\{S_k'\mid k\in \bbn\}$. Applying this replacement, we may assume that 
\begin{enumerate}
\item when $k\in F$, we have $R_k=S_k=C_{j(k)}$ for some $j(k)\in \{1,2,\dots ,m\}$,
\item when $k\in K$, we have $R_k\subseteq C_{j(k)}$ for some $j(k)\in \{1,2,\dots ,m\}$,
\item $A$ is disjoint from $\bigcup_{k\in \bbn}R_k\cup \bigcup_{k\in \bbn}S_k$.
\end{enumerate} 
Let $\scrc '$ be a cubical decomposition of $I^n$ so that $\{A\}\cup \{R_k\mid k\in F\}\subseteq \scrc '$. Applying Lemma \ref{shrinkingcubelemma} in the same fashion as above, we may replace $\scrr$ and $\scrs$ with sub-$n$-domains to ensure that $R_k=S_k\in \scrc '$ when $k\in F$ and so that for every $k\in K$, each of $R_k$ and $S_k$ lies within some element of $\scrc '$. Replacing $\scrc$ with $\scrc '$ allows us to assume that $A\in \scrc$ and that for each $k\in K$, $R_k$ and $S_k$ each lie in some element (possibly distinct) of $\scrc\backslash (\{A\}\cup \{R_k\mid k\in F\})$.

\textit{Step 2:} Using the cubical decomposition $\scrc$ obtained from Step 1, write $\scrc=\{C_1,C_2,\dots, C_m\}$ and define $F'$ as before. There exists unique $j_0\in \{1,2,\dots,m\}$ such that $C_{j_0}=A$. Let $G=\{j\in \{1,2,\dots ,m\}\mid j\notin (F'\cup \{j_0\})\}$. Define a finite $n$-domain $\scrd=\{D_1,D_2,\dots ,D_m\}$ so that $D_j=R_k$ if $j\in F'$ and $C_j=R_k$ and $D_j\subseteq A$ if $j\in G$. Define $\alpha_j=\alpha\circ L_{I^n,C_j}$ and $\beta_j=\beta\circ L_{I^n,C_j}$ so that $\alpha=\prod_{\scrc}\alpha_j$ and $\beta=\prod_{\scrc}\beta_j$. By Corollary \ref{finitecasecor}, there are homotopies $\prod_{\scrc}\alpha_j\simeq \prod_{\scrd}\alpha_j$ and $\prod_{\scrc}\beta_j\simeq \prod_{\scrd}\beta_j$, which are constant on $\bigcup_{j\in F'}D_j=\bigcup_{k\in F}R_k$. Therefore, it suffices to show that $\prod_{\scrd}\alpha_j$ and $\prod_{\scrd}\beta_j$ are homotopic by a homotopy that is constant on $\bigcup_{j\in F'}C_j$.

\textit{Step 3:} The maps $\prod_{\scrd}\alpha_j$ and $\prod_{\scrd}\beta_j$ agree on $I^n\backslash int(A)$, which contains $\bigcup_{j\in F'}C_j$. Therefore, it suffices to show that $\prod_{\scrd}\alpha_j|_{A}\circ L_{I^n,A}$ and $\prod_{\scrd}\beta_j|_{A}\circ L_{I^n,A}$ are homotopic. However, this follows from the from the unrestricted case and the fact that the homotopies used are constructed by isotopic rearrangements (recall Remark \ref{rigidityremark}). Specifically, if $k\in K$ and we have $R_k\subset C_p$ and $S_k\subseteq C_q$, then we may take $R_k'=L_{C_p,D_p}(R_k)$ and $S_k'=L_{C_q,D_q}(S_k)$. Setting $\scrr '=\{R_k'\mid k\in K\}$ and $\scrs '=\{S_k'\mid k\in K\}$, we have $\prod_{\scrd}\alpha_j|_{A}\circ L_{I^n,A}= \prod_{\scrr '}\{f_k\}_{k\in K}$ and $\prod_{\scrd}\beta_j|_{A}\circ L_{I^n,A}= \prod_{\scrs '}\{f_k\}_{k\in K}$. The infinite unrestricted case gives $\prod_{\scrr '}\{f_k\}_{k\in K}\simeq \prod_{\scrs '}\{f_k\}_{k\in K}$, completing the proof.
\end{proof}
\begin{remark}
The proof of Theorem \ref{infiniteeckmannhiltontheorem} also works if $n$-cubes are replaced with arbitrary $n$-dimensional convex sets in $I^n$ and the definitions of $n$-domain and $\scrr$-concatenation are altered accordingly. 
\end{remark}
\section{Some immediate consequences of Theorem \ref{infiniteeckmannhiltontheorem}}\label{sectionconsequences}

The following corollaries to Theorem \ref{infiniteeckmannhiltontheorem} state that infinite $n$-loop concatenations (recall Definition \ref{concatenationdef}) may be rearranged in various ways without altering the homotopy class of the product. As mentioned in the introduction, the author has learned that some of these consequences appear in \cite{Kawamurasuspensions}. For instance, Lemma \ref{splitlemma} appears as \cite[Theorem 2.1]{Kawamurasuspensions}, one of the main results in that paper.

\begin{corollary}
For any null-sequence $\{f_k\}_{k\in\bbn}$ in $\Omega^{n}(X,x)$ and $\phi\in\Sigma_{\bbn}$, we have $\left[\prod_{k=1}^{\infty}f_k\right]=\left[\prod_{k=1}^{\infty}f_{\phi(k)}\right]$.
\end{corollary}
\begin{proof}
Let $\mathscr{R}=\{R_1,R_2,R_3,\dots\}$ be the standard $n$-domain so that $\prod_{\mathscr{R}}f_k=\prod_{k=1}^{\infty}f_k$ and $\prod_{\mathscr{R}}f_{\phi(k)}=\prod_{k=1}^{\infty}f_{\phi(k)}$. Notice that for $n$-domain \[\scrs=\{R_{\phi(1)},R_{\phi(2)},R_{\phi(3)},\dots\},\] we have $\prod_{\mathscr{S}}f_{\phi(k)}=\prod_{k=1}^{\infty}f_k$. Hence, $\left[\prod_{k=1}^{\infty}f_k\right]=\left[\prod_{\mathscr{S}}f_{\phi(k)}\right]= \left[\prod_{\mathscr{R}}f_{\phi(k)}\right]=\left[\prod_{k=1}^{\infty}f_{\phi(k)}\right]$ where the second equality is acquired from applying Theorem \ref{infiniteeckmannhiltontheorem}.
\end{proof}
\begin{corollary}\label{doubleproductshuffle}
If $\{f_k\}_{k\in\bbn}$ and $\{g_k\}_{k\in\bbn}$ are null-sequences in $ \Omega^{n}(X,x)$, then $\left[\prod_{k=1}^{\infty}(f_k\cdot g_k)\right]=\left[\prod_{k=1}^{\infty}f_k\right]+\left[\prod_{k=1}^{\infty}g_k\right]$.
\end{corollary}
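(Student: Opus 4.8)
The plan is to realize both sides of the claimed identity as $n$-domain concatenations of a single \emph{interleaved} null-sequence, and then apply the infinite unrestricted case of Theorem \ref{infiniteeckmannhiltontheorem}. First I would interleave the two given sequences into one sequence $\{h_j\}_{j\in\bbn}$ in $\Omega^n(X,x)$ by setting $h_{2k-1}=f_k$ and $h_{2k}=g_k$. Since $Im(h_{2k-1})=Im(f_k)$ and $Im(h_{2k})=Im(g_k)$ and both $\{f_k\}$ and $\{g_k\}$ are null, every neighborhood of $x$ contains all but finitely many of the $Im(h_j)$; hence $\{h_j\}$ is again a null-sequence. The same observation shows $\{f_k\cdot g_k\}_{k\in\bbn}$ is null, as $Im(f_k\cdot g_k)=Im(f_k)\cup Im(g_k)$, so the infinite concatenation $\prod_{k=1}^{\infty}(f_k\cdot g_k)$ is indeed defined.

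Next I would build two $n$-domains over $\bbn$ for the single sequence $\{h_j\}$. For the left-hand side, I start from the standard $n$-domain cube $R_k=\left[\frac{k-1}{k},\frac{k}{k+1}\right]\times I^{n-1}$ carrying $f_k\cdot g_k$, and split $R_k$ along its first coordinate into its left and right halves. Calling these $R^{h}_{2k-1}$ and $R^{h}_{2k}$, the fact that $f_k\cdot g_k$ restricts to $f_k$ on the left half and to $g_k$ on the right half yields $\prod_{k=1}^{\infty}(f_k\cdot g_k)=\prod_{\scrr}h_j$ for $\scrr=\{R^{h}_j\mid j\in\bbn\}$. For the right-hand side, $\left(\prod_{k=1}^{\infty}f_k\right)\cdot\left(\prod_{k=1}^{\infty}g_k\right)$ is $\prod_{k=1}^{\infty}f_k$ reparametrized into $\left[0,\frac12\right]\times I^{n-1}$ and $\prod_{k=1}^{\infty}g_k$ reparametrized into $\left[\frac12,1\right]\times I^{n-1}$; taking $S_{2k-1}$ to be the image of the standard cube $\left[\frac{k-1}{k},\frac{k}{k+1}\right]\times I^{n-1}$ under the linear homeomorphism $I^n\to\left[0,\frac12\right]\times I^{n-1}$, and $S_{2k}$ the analogous cube in the right half, gives $\left(\prod_{k=1}^{\infty}f_k\right)\cdot\left(\prod_{k=1}^{\infty}g_k\right)=\prod_{\scrs}h_j$ for $\scrs=\{S_j\mid j\in\bbn\}$.

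Finally, since $\scrr$ and $\scrs$ are $n$-domains over the common index set $\bbn$ and $\{h_j\}$ is the same null-sequence appearing in both concatenations, the infinite unrestricted case of Theorem \ref{infiniteeckmannhiltontheorem} gives $\prod_{\scrr}h_j\simeq\prod_{\scrs}h_j$. Translating back, $\left[\prod_{k=1}^{\infty}(f_k\cdot g_k)\right]=\left[\left(\prod_{k=1}^{\infty}f_k\right)\cdot\left(\prod_{k=1}^{\infty}g_k\right)\right]=\left[\prod_{k=1}^{\infty}f_k\right]+\left[\prod_{k=1}^{\infty}g_k\right]$, the last equality being the definition of the group operation on $\pi_n(X,x)$ as concatenation.

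I expect the only real obstacle to be the bookkeeping in the middle step: one must check that the interior-disjointness conditions hold so that $\scrr$ and $\scrs$ are genuine $n$-domains (the left and right halves meet only in a shared face, the left-half and right-half cubes of $\scrs$ are separated by the hyperplane $x_1=\tfrac12$), and one must verify that the two concatenations literally coincide with the two $n$-loops in question by correctly matching the canonical reparametrizations $L_{R,R'}$. Everything else is immediate once the reduction to a single common null-sequence is in place.
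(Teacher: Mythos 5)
Your proposal is correct and follows essentially the same route as the paper's own proof: the paper likewise interleaves the sequences as $\{f_1,g_1,f_2,g_2,\dots\}$, realizes $\prod_{k=1}^{\infty}(f_k\cdot g_k)$ via the left/right halves of the standard cubes and $\left(\prod_{k=1}^{\infty}f_k\right)\cdot\left(\prod_{k=1}^{\infty}g_k\right)$ via rescaled copies of the standard $n$-domain in $\left[0,\tfrac12\right]\times I^{n-1}$ and $\left[\tfrac12,1\right]\times I^{n-1}$, and then invokes Theorem \ref{infiniteeckmannhiltontheorem}. The bookkeeping you flag (interior-disjointness and matching the canonical reparametrizations) is exactly what the paper's construction handles, so no gap remains.
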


\begin{proof}
Let $a_k$ be the midpoint of $\left[\frac{k-1}{k},\frac{k}{k+1}\right]$ and define $A_{k,1}=\left[\frac{k-1}{k},a_k\right]\times I^{n-1}$, $A_{k,2}=\left[a_k,\frac{k}{k+1}\right]\times I^{n-1}$, and $R_k=A_{k,1}\cup A_{k,2}$. Let \[\mathscr{R}=\{A_{1,1},A_{1,2},A_{2,1},A_{2,2},A_{3,1},A_{3,2},\dots\}\] so that $\prod_{\scrr}\{f_1,g_1,f_2,g_2,\dots\}=\prod_{k=1}^{\infty}(f_k\cdot g_k)$. Define $B_k=L_{I^n,[0,1/2]\times I^{n-1}}(R_k)$, $C_k=L_{I^n,[1/2,1]\times I^{n-1}}(R_k)$, and consider the $n$-domain $\scrs=\{B_1,C_1,B_2,C_2,B_3,C_3,\dots\}$ so that $\prod_{\scrs}\{f_1,g_1,f_2,g_2,\dots\}=\left(\prod_{k=1}^{\infty}f_k\right)\cdot \left(\prod_{k=1}^{\infty}g_k\right)$. Overall, we have 
\begin{eqnarray*}
\left[\prod_{k=1}^{\infty}f_k\right]+\left[\prod_{k=1}^{\infty}g_k\right] &=&  \left[\left(\prod_{k=1}^{\infty}f_k\right)\cdot \left(\prod_{k=1}^{\infty}g_k\right)\right]\\
&=&
\left[\prod_{\scrs}\{f_1,g_1,f_2,g_2,\dots\}\right]\\
&=& \left[\prod_{\scrr}\{f_1,g_1,f_2,g_2,\dots\}\right]\\
&=&
\left[\prod_{k=1}^{\infty}(f_k\cdot g_k)\right]
\end{eqnarray*}
where the third equality is acquired using Theorem \ref{infiniteeckmannhiltontheorem}.
\end{proof}

For an infinite sequence $\{(X_m,x_m)\}_{m\in\bbn}$ of path-connected based spaces, let $\wt{\bigvee}_{m}X_m$ denote the wedge sum, i.e. one-point union, where the basepoints $x_m$ are identified to a canonical basepoint $x_0\in\wt{\bigvee}_{m}X_m$. We give $\wt{\bigvee}_{m}X_m$ the topology consisting of sets $U$ such that $U\cap X_m$ is open in $X_m$ for all $m\in\bbn$ and if $x_0\in U$, then $X_m\subseteq U$ for all but finitely many $m$. We call $\wt{\bigvee}_{m}X_m$ the \textit{shrinking wedge} of the sequence $\{(X_m,x_m)\}_{m\in\bbn}$. 
Note that there is a canonical embedding $\sigma:\wt{\bigvee}_{m}X_m\to \prod_{m\in\bbn}X_m$ into the infinite direct product. For $k\geq 0$, if $X_m=S^k\subseteq \bbr^{k+1}$ is the $k$-dimensional unit sphere, with basepoint $x_m=(1,0,\dots ,0)$, then $\bbh_k=\wt{\bigvee}_{m}X_m$ is the \textit{$k$-dimensional Hawaiian earring} space, which embeds in $\bbr^{k+1}$.

A based space $(X_m,x_m)$ is \textit{well-pointed} if the inclusion $\{x_m\}\to X_m$ is a cofibration. Standard arguments in homotopy theory give that $X_m$ is homotopy equivalent (rel. basepoint) to the space $X_{m}^+$ obtained by attaching an interval to $X_m$ at $x_m$ by $1\sim x_m$ and taking the image $x_{m}^+$ of $0$ (at which $X_m$ is locally contractible) to be the basepoint. In this case, it is straightforward to prove that $\wt{\bigvee}_{m}(X_m,x_m)\simeq \wt{\bigvee}_{m}(X_{m}^+,x_{m}^+)$ rel. basepoint. Hence, by assuming that each $(X_m,x_m)$ is well-pointed, we may assume that we are using a shrinking wedge $\wt{\bigvee}_{m}X_{m}^+$ to which the results of \cite{EK00higher} apply.


\begin{lemma}\label{splitlemma}\cite[Theorem 2.1]{Kawamurasuspensions}
For any sequence $\{(X_m,x_m)\}_{m\in\bbn}$ of path-connected spaces and integer $n\geq 2$, the homomorphism $\sigma_{\#}:\pi_n\left(\wt{\bigvee}_{m}X_m\right)\to \pi_n\left(\prod_{m}X_m\right)$ splits naturally.
\end{lemma}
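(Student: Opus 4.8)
The plan is to exhibit an explicit natural left inverse $s$ of $\sigma_\#$, built directly from the infinite concatenation operation. First I would invoke the standard natural identification $\pi_n\left(\prod_m X_m\right)\cong \prod_m\pi_n(X_m)$ (valid for arbitrary path-connected spaces, with no well-pointedness needed), under which an element is a sequence $([g_m])_{m\in\bbn}$ with each $g_m\in\Omega^n(X_m,x_m)$, and $\sigma_\#$ becomes $[\beta]\mapsto ([r_m\circ\beta])_m$, where $r_m=p_m\circ\sigma:\wt{\bigvee}_m X_m\to X_m$ is the retraction collapsing every $X_{m'}$ with $m'\neq m$ to the basepoint. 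Writing $\iota_m:X_m\to\wt{\bigvee}_m X_m$ for the inclusions, the key observation is that for any choice of representatives $g_m$, the sequence $\{\iota_m\circ g_m\}_{m\in\bbn}$ is null at the basepoint $x_0$: by definition of the shrinking wedge topology, every neighborhood of $x_0$ contains $X_m$, hence $Im(\iota_m\circ g_m)$, for all but finitely many $m$. Thus the infinite concatenation is defined, and I would set $s\big(([g_m])_m\big)=\left[\prod_{k=1}^{\infty}(\iota_k\circ g_k)\right]\in\pi_n\left(\wt{\bigvee}_m X_m\right)$.

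Next I would verify that $s$ is a well-defined homomorphism. For well-definedness, if $[g_m]=[g_m']$ for each $m$, I would choose rel-boundary homotopies $G_m:g_m\simeq g_m'$ in $X_m$ and glue the reparametrized maps $\iota_m\circ G_m$ over the cubes of the standard $n$-domain (constant at $x_0$ off these cubes); continuity at the accumulation face again follows from nullity, since $Im(\iota_m\circ G_m)\subseteq X_m$ lies in any prescribed neighborhood of $x_0$ for all large $m$. For the homomorphism property, addition in $\prod_m\pi_n(X_m)$ is componentwise and represented by coordinatewise concatenation, so since $\iota_k\circ(g_k\cdot h_k)=(\iota_k\circ g_k)\cdot(\iota_k\circ h_k)$, Corollary \ref{doubleproductshuffle} applied to the null-sequences $\{\iota_k\circ g_k\}$ and $\{\iota_k\circ h_k\}$ yields exactly $\left[\prod_{k}\iota_k\circ(g_k\cdot h_k)\right]=\left[\prod_k\iota_k\circ g_k\right]+\left[\prod_k\iota_k\circ h_k\right]$.

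Then I would check $\sigma_\#\circ s=\mathrm{id}$. Fixing $m$ and post-composing $\prod_{k=1}^{\infty}(\iota_k\circ g_k)$ with $r_m$ kills every term with $k\neq m$ (since $r_m\circ\iota_k$ is constant there) and restores $g_m$ on its cube (since $r_m\circ\iota_m=\mathrm{id}$); the result is the single-cube concatenation of $g_m$, which is homotopic to $g_m$ by the $n$-domain shrinking Lemma \ref{shrinkingcubelemma}, comparing the single cube to all of $I^n$. Hence $\sigma_\#\big(s(([g_m])_m)\big)=([g_m])_m$. Naturality in the sequence $\{X_m\}$ is then formal: a coordinatewise family of based maps $\psi_m:X_m\to Y_m$ commutes with the inclusions $\iota_m$ and with infinite concatenation (post-composition being applied pointwise), so the defining square relating $s$ for $\{X_m\}$ and $\{Y_m\}$ commutes.

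Finally, I expect the main obstacle to be the homomorphism property. The splitting sends a product class to an \emph{interleaved} infinite concatenation $\prod_k\iota_k(g_k\cdot h_k)$, whereas the sum of the two image classes is the \emph{block} concatenation $\left(\prod_k\iota_k g_k\right)\cdot\left(\prod_k\iota_k h_k\right)$; reconciling these two infinite configurations is precisely the infinite rearrangement that the ordinary Eckmann--Hilton argument cannot perform and that Corollary \ref{doubleproductshuffle}, hence Theorem \ref{infiniteeckmannhiltontheorem}, supplies. Everything else reduces to the nullity condition built into the shrinking wedge topology together with the single-cube case of Lemma \ref{shrinkingcubelemma}.
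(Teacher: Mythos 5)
Your proposal is correct and follows essentially the same route as the paper: the section $s$ defined by infinite concatenation over the standard $n$-domain, well-definedness via gluing homotopies using the nullity guaranteed by the shrinking wedge topology, the homomorphism property via Corollary \ref{doubleproductshuffle}, and $\sigma_{\#}\circ s=\mathrm{id}$ via the coordinate projections. You are in fact slightly more explicit than the paper on the retraction/single-cube step (via Lemma \ref{shrinkingcubelemma}) and on naturality, but these are elaborations of the same argument rather than a different approach.
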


\begin{proof}
Fix $n\geq 2$ and identify $\pi_n\left(\prod_{m}X_m,x_0\right)$ with $\prod_{m}\pi_n(X_m,x_m)$. We define a section $s:\prod_{m}\pi_n(X_m,x_m)\to \pi_n\left(\wt{\bigvee}_{m}X_m\right)$ of $\sigma_{\#}$. Consider a sequence of maps $f_m\in \Omega^{n}(X_m,x_m)$, $m\in\bbn$. Since $X_m\subseteq \wt{\bigvee}_{m}X_m$, we may form the infinite concatenation $\prod_{m=1}^{\infty}f_m$. Define $s:\pi_n(\prod_{m}X_m,(x_m))\to \pi_n(\wt{\bigvee}_{m}X_m,x_0)$ by $s([f_1],[f_2],[f_3],\dots)=\left[\prod_{m=1}^{\infty}f_m\right]$. 

To see that $s$ is well-defined consider another sequence of maps $g_m\in \Omega^{n}(X_m,x_m)$ such that $f_m\simeq g_m$ for all $m\in\bbn$. Let $H_m:I^n\times I\to X_m$ be a homotopy from $f_m$ to $g_m$. Let $\scra=\{A_1,A_2,A_3,\dots\}$ be the standard $n$-domain and define a homotopy $H:I^n\times I\to X$ so that $H|_{A_m\times I}=H_m\circ L_{I^{n+1},A_m\times I}$ and $H(\{1\}\times I^{n}\times I)=x_0$. Since every neighborhood $U$ of $x_0$ contains $X_m$ for all but finitely many $m$, $U$ also contains $Im(H_m)$ for all but finitely many $m$. Thus $H$ is continuous and gives a homotopy from $\prod_{m=1}^{\infty}f_m$ to $\prod_{m=1}^{\infty}g_m$. Therefore, $s([f_1],[f_2],[f_3],\dots)=s([g_1],[g_2],[g_3],\dots)$, proving $s$ is well-defined. If $p_m:\prod_{m}X_m\to X_m$ is the projection, then $p_m\circ \prod_{m=1}^{\infty}f_m\simeq f_m$ making it clear that $\sigma_{\#}\circ s=id$.

Next, we check that $s$ is a homomorphism. Consider elements $([f_1],[f_2],\dots)$ and $([g_1],[g_2],\dots)$ in $\prod_{m}\pi_n(X_m,x_m)$. We have
\begin{eqnarray*}
s([f_1],[f_2],\dots)+s([g_1],[g_2],\dots) &=& \left[\prod_{m=1}^{\infty}f_m\right]+ \left[\prod_{m=1}^{\infty}g_m\right]\\
&=&  \left[\left(\prod_{m=1}^{\infty}f_m\right)\cdot\left(\prod_{m=1}^{\infty}g_m\right)\right]\\
&=& \left[\prod_{m=1}^{\infty}(f_m\cdot g_m)\right]\\
&=& s([f_1\cdot g_1],[f_2\cdot g_2],\dots)\\
&=&  s(([f_1],[f_2],\dots )+([ g_1],[ g_2],\dots))
\end{eqnarray*}
where the third equality follows from Corollary \ref{doubleproductshuffle}.
\end{proof}

\begin{corollary}\label{splittingcorollary}\cite[Corollary 2.2]{Kawamurasuspensions}
For any sequence $\{(X_m,x_m)\}_{m\in\bbn}$ of path-connected spaces and integer $n\geq 2$, there is a natural isomorphism \[\pi_n\left(\wt{\bigvee}_{m}X_m\right)\cong \pi_{n+1}\left(\prod_{m}X_m,\wt{\bigvee}_{m}X_m\right)\oplus \prod_{m}\pi_n(X_m).\]
\end{corollary}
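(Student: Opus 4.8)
The plan is to read the isomorphism directly off the homotopy long exact sequence of the pair $(P,W)$, where $W=\wt{\bigvee}_{m}X_m$ and $P=\prod_{m}X_m$, by feeding it Lemma \ref{splitlemma} in two consecutive dimensions. Writing $\sigma\colon W\to P$ for the canonical inclusion, the relevant segment is
\[\pi_{n+1}(W)\xrightarrow{\sigma_{\#}}\pi_{n+1}(P)\xrightarrow{j}\pi_{n+1}(P,W)\xrightarrow{\partial}\pi_n(W)\xrightarrow{\sigma_{\#}}\pi_n(P).\]
Since $n\geq 2$, every group appearing here is abelian, so I may freely invoke the splitting lemma for abelian groups.

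First I would apply Lemma \ref{splitlemma} in dimension $n$: the map $\sigma_{\#}\colon\pi_n(W)\to\pi_n(P)$ is split surjective with a natural section $s$. The splitting lemma then supplies a natural decomposition $\pi_n(W)\cong\ker(\sigma_{\#})\oplus\operatorname{im}(s)\cong\ker(\sigma_{\#})\oplus\pi_n(P)$, and $\pi_n(P)\cong\prod_{m}\pi_n(X_m)$ under the identification used throughout. Thus everything reduces to identifying $\ker(\sigma_{\#})$ with $\pi_{n+1}(P,W)$.

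The key step, and the one place where the hypothesis $n\geq 2$ is used in an essential way (so that $n+1\geq 3\geq 2$ and Lemma \ref{splitlemma} still applies one dimension higher), is to show that $\partial\colon\pi_{n+1}(P,W)\to\pi_n(W)$ is injective with image exactly $\ker(\sigma_{\#})$. Applying Lemma \ref{splitlemma} in dimension $n+1$ gives that $\sigma_{\#}\colon\pi_{n+1}(W)\to\pi_{n+1}(P)$ is surjective; by exactness at $\pi_{n+1}(P)$ this forces $\ker(j)=\operatorname{im}(\sigma_{\#})=\pi_{n+1}(P)$, i.e.\ $j=0$. Then exactness at $\pi_{n+1}(P,W)$ gives $\ker(\partial)=\operatorname{im}(j)=0$, so $\partial$ is injective, while exactness at $\pi_n(W)$ gives $\operatorname{im}(\partial)=\ker(\sigma_{\#})$. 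Hence $\partial$ is an isomorphism $\pi_{n+1}(P,W)\cong\ker(\sigma_{\#})$.

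Combining the two identifications yields the asserted $\pi_n(W)\cong\pi_{n+1}(P,W)\oplus\prod_{m}\pi_n(X_m)$. Naturality in the sequence $\{X_m\}$ follows from naturality of the homotopy long exact sequence of a pair together with naturality of the section $s$ in Lemma \ref{splitlemma}; since every map in the construction is natural, no separate verification is required beyond assembling them. I expect the only genuinely nontrivial input to be Lemma \ref{splitlemma} itself, which must be used at \emph{both} levels $n$ and $n+1$: without the $(n+1)$-dimensional surjectivity of $\sigma_{\#}$, the connecting map $\partial$ would only identify $\ker(\sigma_{\#})$ with the quotient $\pi_{n+1}(P,W)/\operatorname{im}(j)$, and the clean direct-sum statement would fail.
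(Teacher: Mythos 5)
Your proposal is correct and follows essentially the same route as the paper: both use Lemma \ref{splitlemma} at level $n+1$ to see that the long exact sequence of the pair $\left(\prod_{m}X_m,\wt{\bigvee}_{m}X_m\right)$ breaks into short exact sequences (i.e.\ $\partial$ is injective with image $\ker(\sigma_{\#})$), and then use the natural section $s$ from Lemma \ref{splitlemma} at level $n$ to split the resulting short exact sequence. Your write-up merely makes explicit the diagram chase that the paper compresses into the phrase ``splits into short exact sequences.''
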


\begin{proof}
By Lemma \ref{splitlemma}, the inclusion $\sigma:\wt{\bigvee}_{m}X_m\to \prod_{m}X_m$ induces a surjection in every dimension $n\geq 2$ and thus the homotopy exact sequence of the pair $\left(\prod_{m}X_m,\wt{\bigvee}_{m}X_m\right)$ \cite[Chapter IV.2]{WhiteheadEOH} splits into short exact sequences for $n\geq 2$.
{\scriptsize{\[\xymatrix{
0 \ar[r] &  \pi_{n+1}\left(\prod_{m}X_m,\wt{\bigvee}_{m}X_m\right) \ar[r]^-{\partial} & 
 \pi_n\left(\wt{\bigvee}_{m}X_m\right) \ar@{->>}[r]^-{\sigma_{\#}}  & \pi_n\left( \prod_{m}X_m\right) \ar[r] & 0
}\]}}
Since Lemma \ref{splitlemma} also ensures that $\sigma_{\#}$ splits naturally, the corollary follows.
\end{proof}

\begin{remark}[Topological homotopy groups]\label{topologicalhomotopyremark}
There is a function $p:\prod_{m}\Omega^{n}(X_m,x_m)\to \Omega^{n}(\bigvee_{m}X_m,x_0)$ given by $p(f_1,f_2,f_3,\dots)=\prod_{m=1}^{\infty}f_m$. The continuity of $f$ follows easily from the fact that $\prod_{m}\Omega^{n}(X_m,(x_m))$ has the product topology and $\wt{\bigvee}_{m}X_m$ has the shrinking wedge topology. If $h:\Omega^n(\prod_{m}X_m,(x_m))\to \prod_{m}\Omega^{n}(X_m,x_m)$ is the canonical homeomorphism, then the map $p\circ h$ is continuous and induces the homomorphism $s$ in Lemma \ref{splitlemma}. Hence, the splitting homomorphism $s$ is continuous when the homotopy groups are given the natural quotient topology inherited from the compact-open topology. For instance, if $X_m$ is a sequence of $(n-1)$-connected CW-complexes, then the results of \cite{EK00higher} ensure that $\pi_{n+1}(\prod_{m}X_m,\wt{\bigvee}_{m}X_m)=0$ and thus $\pi_n(\wt{\bigvee}_{m}X_m)\cong \prod_{m}\pi_n(X_m)$ is, in fact, a topological isomorphism of an infinite direct product of the discrete groups $\pi_n(X_m)$.
\end{remark}

\begin{example}
For the $k$-dimensional Hawaiian earring $\bbh_k$, we have $\pi_n\left(\bbh_{k}\right)\cong \pi_{n+1}\left(\prod_{m}S^k,\bbh_{k}\right)\oplus \prod_{m}\pi_n(S^k)$ for all $n\geq 2$. In particular, this means that 
The main result of \cite{EK00higher} ensures that when $k=n\geq 2$, we have $\pi_{n+1}\left(\prod_{m}S^n,\bbh_{n}\right)=0$. In the case $n=3$ and $k=2$, $\pi_3(S^2)=\bbz$ is generated by the Hopf fibration. Hence, as pointed out in \cite[Corollary 2.2]{Kawamurasuspensions},  $\pi_3\left(\bbh_{2}\right)\cong \pi_{4}\left(\prod_{m}S^2,\bbh_{2}\right)\oplus \bbz^{\bbn}$, where the isomorphism type of $\pi_{4}\left(\prod_{m}S^2,\bbh_{2}\right)$ remains unknown since classical cellular approximation and excision arguments fail to apply.
\end{example}

\begin{remark}\label{operadremark}
All of the technical work required to Prove Theorem \ref{infiniteeckmannhiltontheorem} is carried out in the domain $I^n$ and allows one to consider the following infinitary extension of the little $n$-cubes operad \cite{BV} used for the $n$-loop-space recognition principle \cite{MayGoILS}. For any $m\in\bbn$, consider the space $\mcc_n(m)$ of $n$-domains $\scrr=\{R_1,R_2,\dots,R_m\}$ indexed by $\{1,2,\dots,m\}$. Formally, let $M(I^n,I^n)$ denote the space of self maps of $I^n$ with the compact-open topology. Then we take $\mcc_n(m)$ to be the subspace of $\prod_{k=1}^{m}M(I^n,I^n)$ consisting $m$-tuples $\scrr:=(L_{I^n,R_1},L_{I^n,R_2},\dots,L_{I^n,R_m})$ such that $int(R_k)\cap int(R_{k'})=\emptyset$ when $k\neq k'$. The $n$-disks operad acts on $\Omega^n(X,x)$ in the sense that for every $m$ there is an action $\mcc_n(m)\times\Omega^n(X,x)^m\to \Omega(X,x)$, given by $(\scrr,\{f_k\}_{k=1}^{m})\mapsto \prod_{\scrr}f_k$. 

Sending $n\to\infty$ gives the spaces $\mcc_{\infty}(m)=\varinjlim_{n}\mcc_n(m)$, which form an $E_{\infty}$-operad. We consider the situation where $m\to \infty$. Let $\mcc_n(\omega)$ denote the subspace of the direct product space $\prod_{k\in\omega}M(I^n,I^n)$ consisting of $\omega$-sequences $\scrr:=(L_{I^n,R_1},L_{I^n,R_2},L_{I^n,R_3},\dots)$ such that $int(R_k)\cap int(R_{k'})=\emptyset$ when $k\neq k'$. Let $\Omega^{n}_{null}(X,x)$ denote the set of null sequences $\{f_{k}\}_{k\in \omega}$ topologized as a subspace of the infinite direct product $\prod_{k\in\omega}\Omega^{n}(X,x)$. Then $\mcc_n(\omega)$ acts on the space of null-sequences by the infinitary action $\mcc_n(\omega)\times \Omega^{n}_{null}(X,x)\to \Omega^n(X,x)$ given by $(\scrr,\{f_k\}_{k\in\omega})\mapsto \prod_{\scrr}f_k$. A proof that this action is continuous requires a fairly tedious analysis of the respective direct product and compact-open topologies but is straightforward. Moreover, it becomes clear that one can extend the little $n$-cubes operad structure maps to include $\mcc_n(\omega)$ with corresponding structure maps such as $\mcc_n(2)\times \mcc_n(j)\times \mcc_n(\omega)\to \mcc_n(\omega)$ where \[(\{A_1,A_2\},\{B_1,B_2,\dots,B_j\},\{C_1,C_2,C_3,\dots\})\mapsto \{D_1,D_2,\dots,D_j,E_1,E_2,E_3,\dots \}\]
for $D_k=L_{I^n,A_1}(B_k)$ and $E_k=L_{I^n,A_2}(E_k)$. Finally, we recall the known fact that the space $\mcc_n(m)$ is $(n-2)$ connected for $n\geq 2$ \cite{MayGoILS}. The rigidity of the homotopies used to prove the unrestricted infinite case of Theorem \ref{infiniteeckmannhiltontheorem} implies that $\mcc_{n}(\omega)$ is path connected; whether or not $\mcc_{n}(\omega)$ is $(n-2)$-connected seems plausible but remains unproven.
\end{remark}

\end{document}